\documentclass[11pt]{amsart}

\usepackage{amsmath,amssymb,amsthm,color,enumerate,comment,bm}
\usepackage{tikz}
\usepackage{url}
\usepackage[colorlinks=true,linkcolor=blue,citecolor=blue,urlcolor=blue]{hyperref}

\newtheorem{theorem}{Theorem}[section]
\newtheorem{proposition}[theorem]{Proposition}
\newtheorem{lemma}[theorem]{Lemma}
\newtheorem{corollary}[theorem]{Corollary}
\theoremstyle{definition}

\newtheorem{remark}[theorem]{Remark}
\newtheorem{question}{Question}

\newcommand{\ZZ}{\mathbb Z}
\newcommand{\RP}{\mathbb{R}P}

\newcommand{\im}{\operatorname{Im}}

\title[Spanning bipartite quadrangulations of projective spaces]
{Spanning bipartite quadrangulations in even triangulations of real projective spaces}
\author[K. Enami]{Kengo Enami}
\address{College of Liberal Arts and Sciences, Kitasato University, 1-15-1 Kitasato, Minami-ku, Sagamihara, Kanagawa 252-0373, Japan}
\email{enamikengo@gmail.com}
\subjclass[2020]{Primary 05C10, 05C15; Secondary 57Q15}
\keywords{Even triangulation, balanced triangulation, spanning quadrangulation, weak coloring, coloring monodromy, real projective space}

\begin{document}

\begin{abstract}
We study spanning quadrangulations of even triangulations
of real projective spaces, using the higher-dimensional
notion of quadrangulation introduced by Kaiser and Stehl\'{\i}k.
We prove that every even triangulation of $\RP^d$, where
$d\ge 2$, admits a spanning bipartite quadrangulation.
Moreover, such a triangulation admits a spanning
non-bipartite quadrangulation if and only if it is not
balanced; if it is balanced, every spanning
quadrangulation is bipartite.
Our proofs use coloring monodromy: its orbits yield
bipartite quadrangulations, while a covering-space
construction gives non-bipartite ones.
More generally, we establish a sufficient condition
for the existence of spanning bipartite quadrangulations
on closed manifolds in terms of the monodromy action,
and derive a consequence involving the fundamental group.
\end{abstract}
\maketitle

\section{Introduction}\label{sec:intro}

\subsection{Spanning bipartite quadrangulations and weak colorings on surfaces}

A \emph{triangulation} of a closed surface $F$ is a graph cellularly embedded in $F$ such that every face is bounded by a triangle.
A \emph{quadrangulation} is defined analogously, with facial walks of length four. A \emph{spanning quadrangulation} of a triangulation $G$ is a spanning subgraph $Q$ whose embedding inherited from $G$ is a quadrangulation of the same surface. Such a subgraph need not be bipartite when the surface is not the sphere, and the question of when it can be chosen bipartite is a natural problem in topological graph theory. It is well known that every triangulation of a closed surface admits a spanning quadrangulation, for instance by deleting the primal edges corresponding to a perfect matching in the cubic dual graph.
However, requiring such a quadrangulation to be
bipartite is substantially more restrictive
\cite{KT,NNO}.

This question is closely related to a coloring problem for hypergraphs.
A \emph{weak $k$-coloring} of a hypergraph is an assignment of at most
$k$ colors to its vertices such that no hyperedge is monochromatic.
The \emph{face hypergraph} of a surface triangulation $G$ has vertex
set $V(G)$ and one hyperedge for the vertex set of each triangular face.
Thus a \emph{weak $2$-coloring of $G$} is a map
$c:V(G)\to\{0,1\}$ such that no triangular face is monochromatic.
Such a coloring need not be proper.
Face-hypergraph colorings of graphs embedded in surfaces were
systematically studied by K\"{u}ndgen and Ramamurthi \cite{KR}.

The following elementary equivalence is fundamental to this paper.

\begin{proposition}\label{prop:surface-equiv}
A triangulation $G$ of a closed surface has a spanning bipartite quadrangulation if and only if $G$ has a weak $2$-coloring.
\end{proposition}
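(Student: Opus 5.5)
Both directions come down to one observation. In a weak $2$-coloring of $G$, every triangular face is non-monochromatic, so it has exactly one monochromatic edge. Conversely, a spanning quadrangulation $Q$ of $G$ arises by deleting a set $M$ of edges of $G$ such that every face of $G$ loses exactly one edge, with the two triangles sharing each deleted edge merging into a quadrilateral face of $Q$. Equivalently, $M$ corresponds to a perfect matching of the dual cubic graph $G^*$. The plan is to match the monochromatic edges of a coloring with the deleted edges of a quadrangulation.

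\emph{Weak $2$-coloring $\Rightarrow$ spanning bipartite quadrangulation.} Let $c$ be a weak $2$-coloring and let $M$ be the set of edges whose endpoints receive the same color. Each face contains exactly one edge of $M$, so the dual edges of $M$ form a perfect matching of $G^*$. Set $Q=G-M$. Each face of $Q$ is the union of two triangles glued along an edge of $M$; this union is an open disk bounded by a closed walk of length four. Hence $Q$, with its inherited embedding, is a quadrangulation of the same surface. It is spanning, since no vertex is deleted. It is bipartite because every remaining edge joins vertices of different colors, so $c$ is a proper $2$-coloring of $Q$.

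\emph{Spanning bipartite quadrangulation $\Rightarrow$ weak $2$-coloring.} Let $Q\subseteq G$ be a spanning bipartite quadrangulation and let $c$ be a proper $2$-coloring of $Q$. This is a coloring of $V(G)$ because $Q$ is spanning. Let $T$ be a triangular face of $G$. It lies inside some face of $Q$, and all three of its vertices lie on $V(Q)=V(G)$. I need that at least one edge of $T$ belongs to $Q$; that edge is then bichromatic, so $T$ is not monochromatic. For this step I plan to show that each face of $Q$ is a quadrilateral containing exactly one edge of $G$ in its interior, namely a diagonal. A face of $Q$ is an open disk bounded by a $4$-walk. The triangles of $G$ inside it triangulate that disk using only the boundary vertices, because every vertex lies on $Q$. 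A triangulated $4$-gon with no interior vertices is a pair of triangles sharing one diagonal. So $T$ has two of its edges on the boundary of the face, and those edges are edges of $Q$.

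The main obstacle is making this rigorous when the facial walks of $Q$ are degenerate, i.e.\ they repeat vertices or edges. In that case the closure of a face is not an embedded disk, and the count of interior diagonals needs care. I would handle this by cutting the surface along $Q$: the closed face becomes the image of a closed disk whose boundary is a $4$-cycle of corner occurrences. The triangulation of $G$ pulls back to a triangulation of this disk with no interior vertices. Euler's formula (or an ear-decomposition argument) on the disk then gives exactly two triangles and one diagonal. Each triangle therefore has two boundary edges, which map to edges of $Q$, and this completes the argument.
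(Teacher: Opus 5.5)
Your proposal is correct and follows the paper's proof. For one direction you delete the monochromatic edges, one per triangle, to obtain $Q$; for the other you color by the bipartition of $Q$, using that every triangular face of $G$ contains two edges of $Q$. The only difference is that you justify this last fact, showing that each face of $Q$ is a $4$-gon split by a single diagonal via a count on the cut-open disk, which also covers degenerate facial walks, whereas the paper simply asserts it.
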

\begin{proof}
Suppose $c:V(G)\to\{0,1\}$ is a weak $2$-coloring.
Let $Q$ be the spanning subgraph of $G$ obtained by deleting
all edges whose endpoints receive the same color.
Since no triangular face of $G$ is monochromatic,
exactly one edge is deleted from each triangular face.
Thus, each pair of triangular faces sharing a deleted edge
forms a quadrangular face of $Q$.
Consequently, $Q$ is a spanning bipartite quadrangulation
of $G$ with bipartition $(c^{-1}(0),c^{-1}(1))$.

Conversely, let $Q$ be a spanning bipartite quadrangulation and color its bipartition classes by $0$ and $1$. Each triangular face of $G$ contains two edges of $Q$, so its vertices cannot all receive the same color. Hence this coloring is weak.
\end{proof}

Since every quadrangulation of the sphere
is bipartite, it follows that every
triangulation of the sphere admits
a spanning bipartite quadrangulation.
On the other hand, such a quadrangulation does not always
exist on other surfaces.
For example, consider $K_6$ embedded as a triangulation
of the projective plane.
Any spanning quadrangulation $Q$ of this triangulation
has $10$ edges by Euler's formula,
whereas a bipartite graph on six vertices has at most
$3\cdot 3=9$ edges.
Hence $K_6$ has no spanning bipartite quadrangulation.

K\"{u}ndgen and Ramamurthi \cite{KR} posed the following question.
The \emph{edge-width} of a graph embedded on a surface
is the length of a shortest noncontractible cycle.

\begin{question}
For every closed surface $S$ other than the sphere,
does there exist a constant $w(S)$ such that
every triangulation of $S$ with edge-width
at least $w(S)$ admits a weak $2$-coloring?
\end{question}

Even if we restrict our attention to triangulations satisfying
a natural parity condition, the problem remains nontrivial.
A triangulation is called \emph{even} if every vertex has even degree.
For even triangulations of the projective plane,
K\"{u}ndgen and Thomassen proved the following result.

\begin{theorem}[K\"{u}ndgen and Thomassen \cite{KT}]\label{thm:KT}
Let $G$ be an even triangulation of the projective plane.
\begin{enumerate}[(i)]
\item If $G$ is $3$-colorable, every spanning quadrangulation
of $G$ is bipartite.
\item If $G$ is not $3$-colorable, then $G$ has both a spanning
bipartite quadrangulation and a spanning non-bipartite
quadrangulation.
\end{enumerate}
\end{theorem}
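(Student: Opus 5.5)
Our plan is to pass to the universal (double) cover and reduce everything to statements about weak $2$-colorings, using Proposition~\ref{prop:surface-equiv} for the bipartite part. Let $p:\tilde G\to G$ be the double cover, so $\tilde G$ is an even triangulation of the sphere with a fixed-point-free involution $\tau$. Even triangulations of the sphere are $3$-colorable, and the proper $3$-coloring $\tilde\phi$ is unique up to permuting colors. Since $\tilde\phi\circ\tau$ is again a proper $3$-coloring, there is a permutation $\sigma\in S_3$ with $\tilde\phi\circ\tau=\sigma\circ\tilde\phi$. This $\sigma$ is the coloring monodromy along an orientation-reversing loop, and $\sigma^2=\mathrm{id}$. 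So $\sigma$ is either the identity, in which case $G$ is $3$-colorable, or a transposition, say $\sigma=(1\,2)$, fixing color $0$.

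\textbf{Part (i).} Suppose $G$ is $3$-colorable, and let $Q$ be a spanning quadrangulation. We will show every closed walk in $Q$ has even length. Each face of $Q$ is the union of two triangles of $G$ with their common (deleted) edge removed. In the proper $3$-coloring, the two opposite vertices of such a quadrangle not on the deleted edge have the same color, because both differ from the two colors on the diagonal. Hence $Q$-edges join vertices whose colors differ, and the plan is to find a $\ZZ_2$-valued invariant. A quadrangle has colors $a,b,a,c$ with $\{b,c\}$ the two endpoints of the deleted edge, which are distinct.

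We use the parity argument in $H_1$. Every face of $Q$ is a $4$-cycle, so any contractible closed walk in $Q$ is even. It therefore suffices to check a single noncontractible cycle of $Q$. Lift it to a path in $\tilde G$ from $x$ to $\tau x$. Along each edge, the color changes by $\pm1 \pmod 3$. The edges of the walk are edges of $G$, and for the whole triangulation the sign of each step is governed by a local orientation. Because $\sigma=\mathrm{id}$ and the path reverses orientation, the signed sum of steps must be $0$ modulo $3$ while its parity matches the length. Making this precise is the main obstacle. Our fallback is a count: $Q$ has $|E(G)|/3\cdot 2$ edges, and a noncontractible curve transverse to $G$ crosses each deleted edge pairing. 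We would instead try to show directly that the quadrangles of $Q$ can be $2$-colored at vertices as $\{$color $a\}$ versus not, which propagates consistently exactly when $\sigma=\mathrm{id}$.

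\textbf{Part (ii).} Suppose $\sigma=(1\,2)$. For the bipartite quadrangulation, set $c(v)=0$ if $\tilde\phi$ takes value $0$ on the lifts of $v$, and $c(v)=1$ otherwise. This is well defined because $\sigma$ fixes $0$. Each triangle uses all three colors, so it has one vertex colored $0$ and two colored $1$, and is never monochromatic. Proposition~\ref{prop:surface-equiv} then gives a spanning bipartite quadrangulation. For the non-bipartite one, the analogous orbit $\{1,2\}$ construction fails to descend, and we construct $Q$ by deleting, in each triangle, the edge with colors $\{0,1\}$ or $\{0,2\}$ chosen via the lift. We then check that an orientation-reversing cycle of $Q$ has odd length because $\sigma$ swaps $1$ and $2$ once.
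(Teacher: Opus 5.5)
Your bipartite half of (ii) is correct and matches the paper: $\sigma$ fixes $0$, so $\{0\}$ versus $\{1,2\}$ is a union of monodromy orbits and gives a weak $2$-coloring. The genuine gap is part (i). You never show that a noncontractible closed walk of $Q$ has even length, and the route you sketch cannot give this. Knowing that the signed sum of the $\pm1$ steps vanishes modulo $3$ carries no parity information: three $+1$ steps already sum to $0$ modulo $3$. Orientation reversal plays no role. Your fallback fails too. The color $a$ of a face $a,b,a,c$ changes from face to face, so ``color $a$ versus the rest'' is only a local bipartition, and saying these glue consistently restates bipartiteness of $Q$ rather than proving it.

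The missing idea is to keep the winding number as an \emph{integer}. Fix a proper $3$-coloring $c$ of $G$, which is also one of $Q$, and set $\omega(u\to v)=\pm1$ according as $c(v)-c(u)\equiv\pm1\pmod 3$. Around each face of $Q$ the sum of $\omega$ is an even integer of absolute value at most $4$ that is divisible by $3$, hence $0$. So $W\mapsto\sum_W\omega$ is unchanged by backtracks and by moves across faces of $Q$, and defines a homomorphism $\pi_1(\RP^2)\cong\ZZ_2\to\ZZ$, which must be trivial. Hence every closed walk of $Q$ has as many $+1$ as $-1$ steps and so has even length. This is the standard proof of Youngs' theorem. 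The paper instead quotes that theorem as the case $d=2$ of Theorem~\ref{thm:KS}: $Q\subseteq G$ is $3$-colorable, whereas a non-bipartite quadrangulation of $\RP^2$ has chromatic number at least $4$.

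Your non-bipartite half of (ii) is the paper's covering construction in outline, but two things must be supplied. First, the deleted edge cannot be chosen triangle by triangle. For every triangle to lose exactly one edge, each color-$0$ vertex must lose all its edges to one of the two alternating classes of its even link. On the sphere this is exactly the paper's rule $\varphi(\tau v)=1-\varphi(v)$ on pairs of color-$0$ lifts, with colors $1,2$ sent to $0,1$. Second, ``$\sigma$ swaps $1$ and $2$ once'' must become an actual odd closed walk. The paper gets it from connectivity of the lifted quadrangulation (Lemma~\ref{lem:edgeconstruction}) and the fact that $\tau$ interchanges its partite sets (Lemma~\ref{lem:oddquotient}). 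Alternatively, take a path from a color-$1$ vertex $x$ to $\tau x$ inside the subgraph of $\tilde G$ spanned by colors $1$ and $2$. This subgraph is connected: reroute any path of $\tilde G$ through the link of each color-$0$ vertex it visits. The path alternates colors and ends at color $2$, so it is odd. It uses no deleted edge, so it projects to an odd closed walk of $Q$.
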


One of our main results extends this theorem to even
triangulations of the $d$-dimensional real projective space
$\RP^d$, using the notion of quadrangulation introduced by
Kaiser and Stehl\'{\i}k \cite{KS}.

In contrast to the projective plane, an even triangulation
of the torus does not necessarily have a spanning bipartite
quadrangulation.
K\"{u}ndgen and Thomassen \cite{KT} proved that every even
triangulation of the torus has a spanning non-bipartite
quadrangulation and that it also has a spanning bipartite
quadrangulation if its edge-width is sufficiently large.
Nakamoto, Noguchi and Ozeki \cite{NNO} gave a complete
characterization: an even triangulation of the torus has
a spanning bipartite quadrangulation if and only if
it contains no $K_7$ as a subgraph.
In particular, such a quadrangulation exists
if the edge-width of $G$ is at least $4$.
Moreover, they proved that for every closed surface $F$
other than the sphere and the projective plane,
there exists an integer $N(F)$ such that every even
triangulation of $F$ with edge-width at least $N(F)$
has both a spanning bipartite quadrangulation
and a spanning non-bipartite quadrangulation.

Recently, Noguchi \cite{Noguchi2024} completely solved
the existence problem for spanning bipartite quadrangulations
of triangulations of the projective plane, without assuming
evenness, by characterizing all triangulations that admit
such a quadrangulation.

\subsection{Triangulations and quadrangulations in higher dimensions}

We work with finite simplicial complexes,
except when considering covering triangulations,
which may be infinite but are locally finite.
Fix an integer $d\ge 2$ and a connected closed
triangulable $d$-manifold $M$.
Let $T$ be a $d$-dimensional simplicial complex
whose geometric realization $|T|$ is homeomorphic to $M$,
and let $G=T^{(1)}$ be its $1$-skeleton.
We call $G$ a \emph{triangulation of $M$},
with the understanding that the underlying simplicial complex
$T$ is specified.
In particular, the faces and facets of $G$ refer to those of $T$.
We use $T$ when the simplicial structure is relevant
and $G$ when discussing graph-theoretic properties.

A face of $T$ with $k+1$ vertices is called a \emph{$k$-face}.
A \emph{facet} of $T$ is a maximal simplex, so every facet
has $d+1$ vertices.
The \emph{degree} of a $(d-2)$-face $\rho$ is the number
of facets containing $\rho$.
We say that $T$, or its triangulation graph $G$, is
\emph{even} if the degree of every $(d-2)$-face is even.
For $d=2$, these faces are vertices and the definition
reduces to the usual evenness of a surface triangulation.

Since every facet spans a $K_{d+1}$ in $G$, we have
$\chi(G)\ge d+1$.
A triangulation is \emph{balanced} if $\chi(G)=d+1$ (see, e.g., \cite{IKN}).
Every balanced triangulation is even: indeed, fix a
$(d-2)$-face $\rho$.
The facets containing $\rho$ form a cyclic sequence
around it, and the two colors not appearing on $\rho$
must alternate as one goes around the sequence.
Thus the number of such facets is even.
The converse fails on general manifolds, as even
triangulations of $\RP^2$ need not be $3$-colorable.
Nevertheless, for simply connected closed manifolds, such as the
$d$-dimensional sphere $S^d$, evenness and balancedness
coincide, as explained in Section~\ref{sec:monodromy}.

A \emph{weak $k$-coloring} of $T$ is a coloring of its vertex set with at most $k$ colors in which no facet is monochromatic. Its \emph{weak chromatic number} $\chi_{\mathrm w}(T)$ is the least such $k$. This is exactly the weak chromatic number of the $(d+1)$-uniform hypergraph whose hyperedges are the facets of $T$. For a balanced triangulation, merging its $d+1$ colors into two nonempty classes gives a weak $2$-coloring. In particular, every even triangulation of $S^d$ has a weak $2$-coloring, since it is balanced. This contrasts with the case of arbitrary triangulations:
Lee and Nevo \cite{LN} proved that, for every fixed
triangulable $d$-manifold with $d\ge3$, weak chromatic
numbers of its triangulations are unbounded.
In particular, even $S^d$ admits triangulations with
no weak $2$-coloring when $d\ge3$.
This is in sharp contrast to the two-dimensional case,
where every triangulation of $S^2$ admits a weak $2$-coloring.

Kaiser and Stehl\'{\i}k \cite{KS} introduced a higher-dimensional
generalization of quadrangulations on surfaces.
Their motivation was to extend Youngs' theorem \cite{Youngs}
on the chromatic number of quadrangulations
of the projective plane.
(Other notions of higher-dimensional quadrangulations
have also been used to study extensions of Youngs' theorem;
see \cite{EM,HNO,KLNNO}.)
Let $G$ be a triangulation of $M$ with underlying simplicial
complex $T$, not necessarily even.
A \emph{Kaiser--Stehl\'{\i}k quadrangulation} of $G$
is a spanning subgraph $Q$ of $G$ such that, for every facet
$\sigma$ of $T$, the subgraph $Q[V(\sigma)]$ induced by $V(\sigma)$ is a
complete bipartite graph with at least one edge.
Equivalently, $Q[V(\sigma)]$ is isomorphic to $ K_{a,d+1-a}$
for some $1\le a\le d$, where $a$ may depend on $\sigma$.
When $d=2$, the only possibility is $K_{1,2}$,
and this coincides with the usual notion of a spanning
quadrangulation of a surface triangulation.
Throughout this paper, we simply call such a graph
a (spanning) \emph{quadrangulation} of $G$.

The higher-dimensional version of Proposition~\ref{prop:surface-equiv} is equally elementary, but it is worth recording explicitly.

\begin{proposition}\label{prop:weak-equiv}
Let $G$ be a triangulation of a connected
$d$-manifold $M$, where $d\ge 2$.
Then the following are equivalent:
\begin{enumerate}[(i)]
\item $G$ admits a weak $2$-coloring;
\item $G$ has a spanning bipartite quadrangulation.
\end{enumerate}
\end{proposition}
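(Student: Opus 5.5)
We prove the two implications separately, mirroring the proof of Proposition~\ref{prop:surface-equiv}.

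\emph{(i)$\Rightarrow$(ii).} Let $c:V(G)\to\{0,1\}$ be a weak $2$-coloring, and let $Q$ be the spanning subgraph of $G$ whose edges are exactly the edges of $G$ with differently colored endpoints. $Q$ is bipartite with bipartition $(c^{-1}(0),c^{-1}(1))$, because every edge of $Q$ joins the two classes. Fix a facet $\sigma$ and put $A=V(\sigma)\cap c^{-1}(0)$ and $B=V(\sigma)\cap c^{-1}(1)$. Since $V(\sigma)$ spans a clique in $G$, the edges of $Q[V(\sigma)]$ are precisely the pairs $\{a,b\}$ with $a\in A$ and $b\in B$. Hence $Q[V(\sigma)]$ is the complete bipartite graph $K_{|A|,|B|}$. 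Because $\sigma$ is not monochromatic, both $A$ and $B$ are nonempty, so this graph has at least one edge. Therefore $Q$ is a spanning bipartite quadrangulation.

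\emph{(ii)$\Rightarrow$(i).} Let $Q$ be a spanning bipartite quadrangulation, and fix a bipartition $(X,Y)$ of $Q$. Since $Q$ is spanning, $X\cup Y=V(G)$; if $Q$ is disconnected, any choice of bipartition works. Color $X$ by $0$ and $Y$ by $1$. For a facet $\sigma$, the graph $Q[V(\sigma)]$ contains at least one edge $uv$. Its endpoints lie in different classes of the bipartition, so they get different colors and $\sigma$ is not monochromatic. Hence the coloring is a weak $2$-coloring.

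The argument is routine and has no real obstacle. The only point needing care is in (i)$\Rightarrow$(ii): we must use that every pair of vertices of a facet is an edge of $G$. This is what makes $Q[V(\sigma)]$ the complete bipartite graph between $A$ and $B$, not merely a subgraph of it. Connectivity of $M$ and the assumption $d\ge 2$ are not needed.
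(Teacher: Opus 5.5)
Your proof is correct and follows the same route as the paper. In one direction you delete the monochromatic edges and use that each facet spans a clique, which gives $K_{|A|,|B|}$ with $A,B\neq\emptyset$; in the other you color by the bipartition classes and use that each facet contains an edge of $Q$. Your remark that neither connectivity of $M$ nor $d\ge 2$ is used is also accurate.
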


\begin{proof}
Suppose that $G$ admits a weak $2$-coloring $c$.
Let $Q$ be the spanning subgraph of $G$ obtained by
deleting all edges whose endpoints receive the same color.
For each facet $\sigma$ of $G$, there are $a$ vertices
of one color and $d+1-a$ vertices of the other,
where $1\le a\le d$.
Since every facet induces a complete graph in $G$,
we have $Q[V(\sigma)]\cong K_{a,d+1-a}$.
Thus $Q$ is a spanning bipartite quadrangulation of $G$.

Conversely, suppose that $G$ has a spanning bipartite
quadrangulation $Q$.
Color the vertices of $G$ according to the bipartition of $Q$.
Since $Q$ induces a nonempty complete bipartite graph
on every facet of $G$, no facet is monochromatic.
Hence this coloring is a weak $2$-coloring of $G$.
\end{proof}

The following theorem of Kaiser and Stehl\'{\i}k \cite{KS},
which extends Youngs' theorem on non-bipartite
quadrangulations of the projective plane to higher dimensions,
will be used in the proof of our main theorem.

\begin{theorem}[Kaiser and Stehl\'{\i}k \cite{KS}]\label{thm:KS}
If $Q$ is a non-bipartite quadrangulation of $\RP^d$, where $d\ge2$, then $\chi(Q)\ge d+2$.
\end{theorem}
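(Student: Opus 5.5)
This result is Kaiser and Stehl\'{\i}k's; the paper only quotes it and uses it later, so I would reconstruct their argument rather than derive it from Proposition~\ref{prop:weak-equiv}. The approach is topological. A quadrangulation $Q$ of a triangulation $T$ of $\RP^d$ is non-bipartite exactly when some closed walk of odd length in $Q$ is homotopically nontrivial in $|T|$. Given a proper coloring of $Q$ with $k$ colors, I would build a $\ZZ_2$-equivariant map from the double cover $S^d$ to a $(k-2)$-dimensional sphere with the antipodal action. The Borsuk--Ulam theorem then forces $k-2\ge d$, that is, $\chi(Q)\ge d+2$.

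\textbf{Step 1 (the parity class).} On every facet $\sigma$, $Q[V(\sigma)]$ is complete bipartite, so its vertices split into two sides $A_\sigma$ and $B_\sigma$. For an edge $uv$ of $T$, the parity of the length of any $Q$-path between $u$ and $v$ inside a common facet is determined: it is $0$ if $u,v$ are on the same side and $1$ otherwise. These parities define a simplicial $1$-cochain with $\ZZ_2$ coefficients. Checking consistency on triangles shows it is a cocycle, and it is nonzero in cohomology exactly when $Q$ is non-bipartite. Since $H^1(\RP^d;\ZZ_2)=\ZZ_2$, non-bipartiteness means this cocycle is the generator. Consequently, on the lift $\tilde T$ to $S^d$ the cocycle becomes a coboundary. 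This gives a $2$-coloring $s:V(\tilde T)\to\{+,-\}$ such that the deck involution $\tau$ swaps the two classes and each lifted facet is split into its two sides by $s$.

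\textbf{Step 2 (equivariant map).} Take a proper coloring $c:V(G)\to[k]$ of $Q$. For a vertex $\tilde v$ of $\tilde T$, send it to $s(\tilde v)\,e_{c(v)}\in\mathbb R^k$, and extend linearly over the simplices of $\tilde T$. Within a facet, two vertices on opposite sides are adjacent in $Q$, so they have different colors and hence map to distinct basis directions. Two vertices on the same side map with the same sign. So the image of each facet lies in the boundary complex of the cross-polytope and never contains both $e_i$ and $-e_i$. The image therefore avoids the origin, and the map $S^d\to\partial(\text{cross-polytope})\cong S^{k-1}$ is antipodal.

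\textbf{Step 3 (the main obstacle: sharpening the dimension).} Borsuk--Ulam applied to the map of Step 2 gives only $k-1\ge d$, which is one short of the claim. To gain the extra dimension I would exploit that every facet image uses both signs. Each facet contains at least one $+$ vertex and at least one $-$ vertex, so its image lies in a simplex of the cross-polytope that meets both the positive and the negative coordinate parts. I would compose with the map $x\mapsto x-\frac{1}{k}\sum_i x_i\,(1,\dots,1)$, projecting away the diagonal direction. The claim to verify is that no point of such a mixed simplex projects to $0$. This holds because a point of the form $\sum_{i\in I}a_ie_i-\sum_{j\in J}b_je_j$ with $I,J$ disjoint and both nonempty cannot be a multiple of $(1,\dots,1)$. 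The projection then yields an antipodal map $S^d\to S^{k-2}$, so $d\le k-2$.

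This diagonal-projection step, together with checking that the cocycle in Step 1 really is a cocycle, is where I expect the care to be needed.
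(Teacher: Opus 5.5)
Your proof is correct and complete. The paper does not prove this statement: it quotes it from Kaiser and Stehl\'{\i}k and uses it only as a black box in the balanced case of Theorem~\ref{thm:main}, so there is no in-paper argument to compare yours against. In effect, you have written out the standard Borsuk--Ulam/box-complex argument explicitly. Recording the shores of each lifted facet gives a $\ZZ_2$-map from $S^d$ into the part of the box complex of $Q$ where both shores are nonempty. Your cross-polytope map followed by the diagonal projection is then the usual $\ZZ_2$-map, induced by a $k$-coloring, from the corresponding complex of $K_k$ to $S^{k-2}$. Your version needs nothing beyond Borsuk--Ulam and simplicial $H^1$. The box-complex formulation instead isolates a topological invariant (a $\ZZ_2$-coindex bound), which then applies to other graphs without redoing the coloring step.

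Two points deserve a line each in a write-up.

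\emph{The cochain of Step~1.} It is simply the indicator of $E(Q)$. Since $Q[V(\sigma)]$ is complete bipartite, an edge of $T$ lies in $Q$ exactly when its ends are on opposite shores of every facet containing it. So well-definedness is immediate, and the cocycle condition holds because on each facet the cochain is the coboundary of that facet's bipartition.

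\emph{The swap by the deck involution.} The assertion that the deck involution $\tau$ swaps the classes of $s$ needs an argument. The function $s+s\circ\tau$ has zero coboundary on the connected complex $\widetilde T$, so it is constant. If the constant were $0$, then $s$ would descend to $V(G)$ and make the parity cochain a coboundary on $T$, i.e., properly $2$-color $Q$, a contradiction.

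The step you flagged as the main obstacle is fine as stated. On a mixed simplex, the coordinates indexed by $I$ are $\ge0$ and those indexed by $J$ are $\le0$, with $I\cap J=\emptyset$ and both sets nonempty. Hence equality with $\lambda(1,\dots,1)$ forces $\lambda=0$. The point would then be the origin, which is impossible because its nonnegative weights sum to $1$.
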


\subsection{Our results}

Our main theorem extends Theorem~\ref{thm:KT}
to even triangulations of real projective spaces
of arbitrary dimension.

\begin{theorem}\label{thm:main}
Let $d\ge2$, and let $G$ be an even triangulation of $\RP^d$.
Then the following hold:
\begin{enumerate}[(i)]
\item If $G$ is balanced, every spanning quadrangulation
of $G$ is bipartite.
\item If $G$ is not balanced, then $G$ has both
a spanning bipartite quadrangulation and
a spanning non-bipartite quadrangulation.
\end{enumerate}
\end{theorem}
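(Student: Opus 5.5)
Part (i) follows from Theorem~\ref{thm:KS}, and part (ii) needs two separate constructions coming from the coloring monodromy of $T$.

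\textbf{Part (i).} Suppose $G$ is balanced, so $\chi(G)=d+1$. Let $Q$ be any spanning quadrangulation of $G$. Then $Q$ is a subgraph of $G$, so $\chi(Q)\le d+1$. If $Q$ were non-bipartite, Theorem~\ref{thm:KS} would give $\chi(Q)\ge d+2$, a contradiction. Strictly, Theorem~\ref{thm:KS} is stated for quadrangulations of $\RP^d$, and $Q$ is a quadrangulation in the Kaiser--Stehl\'{\i}k sense relative to the triangulation $T$ of $\RP^d$. That is exactly their setting, so (i) is immediate.

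\textbf{Setup for part (ii): the monodromy.} Fix a facet $\sigma_0$ and a bijection from $V(\sigma_0)$ to $[d+1]=\{1,\dots,d+1\}$. Moving to an adjacent facet across a ridge, the colors extend uniquely: the new vertex receives the color of the vertex it replaces. Evenness of $T$ says that going around any $(d-2)$-face returns the same coloring. So the transported coloring depends only on the homotopy class of a gallery in the dual graph, and we get a homomorphism $\phi:\pi_1(\RP^d)=\ZZ/2\to S_{d+1}$. Then $G$ is balanced if and only if $\phi$ is trivial. So in case (ii), $\phi$ sends the generator to an involution $\tau\ne \mathrm{id}$.

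\textbf{Part (ii), bipartite quadrangulation.} On the universal (double) cover $S^d$, the lifted triangulation $\tilde T$ is balanced with coloring $\tilde c:V(\tilde T)\to[d+1]$. The deck transformation acts on colors by $\tau$, meaning $\tilde c(gv)=\tau(\tilde c(v))$. Choose a partition $[d+1]=A\sqcup B$ with $A,B$ nonempty and both $A$ and $B$ unions of $\tau$-orbits. This is always possible: $\tau$ has at least two orbits, since $d+1\ge3$ and an involution has orbits of size at most $2$. Define $c(v)=0$ if $\tilde c(\tilde v)\in A$ and $c(v)=1$ otherwise, where $\tilde v$ is a lift of $v$; this is well defined by $\tau$-invariance of $A$. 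Each facet of $T$ lifts to a rainbow facet, so it meets both $A$ and $B$, and $c$ is a weak $2$-coloring. By Proposition~\ref{prop:weak-equiv}, $G$ has a spanning bipartite quadrangulation.

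\textbf{Part (ii), non-bipartite quadrangulation.} Since $\tau\ne\mathrm{id}$, some transposition pair $\{i,j\}$ is swapped by $\tau$. Take a partition $A\sqcup B$ of $[d+1]$ with $\tau(A)=B$ on the pair $\{i,j\}$, placing $i\in A$ and $j\in B$. On the remaining colors, choose the partition so that $\tau$ moves it to its complement only where forced. Simplest is to make $A$ and $B$ $\tau$-invariant off $\{i,j\}$, and to ensure $A\setminus\{i\}$ or $B\setminus\{j\}$ is nonempty only when needed.

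Let $\tilde Q$ on $\tilde T$ consist of the edges between $\tilde c^{-1}(A)$ and $\tilde c^{-1}(B)$. An edge $uv$ in $\tilde Q$ has colors $a\in A$, $b\in B$. Its image under the deck transformation has colors $\tau a,\tau b$, which lie on opposite sides: both $\{\tau a,\tau b\}=\{i,j\}$ swapped, or both fixed-side. Hence $\tilde Q$ is deck-invariant and descends to a subgraph $Q$ of $G$. On each facet, $Q$ induces $K_{|A|,|B|}$ with both parts nonempty, so $Q$ is a quadrangulation.

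To see that $Q$ is non-bipartite, take a vertex of color $i$ and a path in $\tilde Q$ to its deck image, which has color $j$. Such a path exists because $\tilde Q$ is connected: the dual graph of $\tilde T$ is connected and each facet's $K_{a,b}$ is connected. Every edge of $\tilde Q$ changes side, and the endpoints lie on opposite sides, so the path has odd length. It projects to a closed odd walk in $Q$, so $Q$ contains an odd cycle and is non-bipartite.

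\textbf{Main obstacle.} The step I expect to be hardest is rigorously setting up the monodromy, that is, showing that evenness makes $\phi$ well defined on $\pi_1$. This needs the facts that gallery homotopies in a closed pseudomanifold are generated by moves around $(d-2)$-faces, and that the double cover's coloring is therefore global.
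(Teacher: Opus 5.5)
Your part (i) and the bipartite half of part (ii) are correct and match the paper. The paper builds that weak $2$-coloring directly on $G$ from the orbit partition of Lemma~\ref{lem:orbitpartition}, which is equivalent to your lift-based definition.

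\textbf{The gap.} The non-bipartite construction fails: $\tilde Q$ is not invariant under the deck transformation $\alpha$. Your case analysis (``both $\{\tau a,\tau b\}=\{i,j\}$ swapped, or both fixed-side'') omits mixed edges.

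Write $A=\{i\}\sqcup A'$ and $B=\{j\}\sqcup B'$ with $A',B'$ $\tau$-invariant. Since $d+1\ge 3$, one of $A',B'$ is nonempty; say $a\in A'$. Every facet of $\tilde T$ contains an edge with colors $a$ and $j$, and this edge lies in $\tilde Q$. Its image under $\alpha$ has colors $\tau(a)\in A'$ and $\tau(j)=i$, both in $A$, so the image is not in $\tilde Q$. The case $B'\neq\emptyset$ is symmetric. Projecting anyway does not help: on each facet the union of $K_{A,B}$ and $K_{\tau A,\tau B}$ contains the triangle on colors $i,j,a$.

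This is not a matter of choosing the partition better. Any $\tilde Q$ determined by a partition of the colors is $\alpha$-invariant only if $\tau(A)\in\{A,B\}$. If $\tau(A)=A$, the $2$-coloring descends to $G$ and the quotient is bipartite. If $\tau(A)=B$, then $\tau$ has no fixed points. That is impossible whenever $d$ is even, in particular in the base case $d=2$, where $\tau$ is a transposition in $S_3$.

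\textbf{The missing idea.} The $2$-coloring of the cover must not be a function of $\tilde c$ alone. The paper defines $\varphi:V(\tilde G)\to\{0,1\}$ as follows:
\begin{enumerate}[(a)]
\item send $\tilde c^{-1}(i)$ to $0$ and $\tilde c^{-1}(j)$ to $1$;
\item treat every other $2$-cycle of $\tau$ the same way, giving one color class value $0$ and the other value $1$;
\item for each color $k$ fixed by $\tau$, split $\tilde c^{-1}(k)$ into the orbits $\{v,\alpha v\}$, which have size two because $\alpha$ acts freely, and give the two vertices of each orbit opposite values.
\end{enumerate}
Then $\varphi(\alpha v)=1-\varphi(v)$ for every vertex $v$. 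Each facet is still non-monochromatic, because it contains one vertex of color $i$ and one of color $j$. The graph $\tilde Q$ of $\varphi$-bichromatic edges is $\alpha$-invariant, and $\alpha$ swaps its two sides. The quotient is a quadrangulation whose local type $K_{a,d+1-a}$ may vary from facet to facet. With this $\varphi$, your connectivity and odd-walk arguments go through unchanged.

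The step you flagged as the main obstacle, well-definedness of the monodromy, is standard via dual $2$-cells. The real difficulty is handling the colors fixed by $\tau$.
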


Theorem~\ref{thm:main} recovers Theorem~\ref{thm:KT}
when $d=2$ and yields the following corollary
for every $d\ge2$.

\begin{corollary}\label{cor:mainexist}
Every even triangulation of $\RP^d$, for $d\ge2$,
has a spanning bipartite quadrangulation.
\end{corollary}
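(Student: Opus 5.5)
The corollary follows from Theorem~\ref{thm:main} by splitting into two cases. If $G$ is not balanced, part~(ii) of Theorem~\ref{thm:main} directly provides a spanning bipartite quadrangulation. If $G$ is balanced, part~(i) only says that every spanning quadrangulation is bipartite. So in this case we still need to show that at least one spanning quadrangulation exists.

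For the balanced case we argue through Proposition~\ref{prop:weak-equiv}, which reduces the task to exhibiting a weak $2$-coloring. Fix a proper $(d+1)$-coloring $c:V(G)\to\{0,1,\dots,d\}$. Each facet spans a $K_{d+1}$, so every facet receives all $d+1$ colors exactly once. Define $c'(v)=0$ if $c(v)=0$, and $c'(v)=1$ otherwise. Every facet then contains exactly one vertex with $c'=0$ and $d\ge 2$ vertices with $c'=1$, so no facet is monochromatic. Thus $c'$ is a weak $2$-coloring.

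By Proposition~\ref{prop:weak-equiv}, $G$ therefore has a spanning bipartite quadrangulation. Explicitly, this is the graph obtained by deleting the edges joining two vertices of nonzero $c$-color, and it induces $K_{1,d}$ on each facet. Combining the two cases proves the corollary.

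The proof is immediate once Theorem~\ref{thm:main}(ii) is available. The only step that needs care is noticing that part~(i) by itself gives no existence statement, which is why the balanced case must be handled separately.
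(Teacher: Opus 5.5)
Your balanced-case argument is correct; it is the $a=1$ case of Proposition~\ref{prop:projective-types}(i). You are also right that part~(i) of Theorem~\ref{thm:main} by itself asserts no existence. As a formal deduction from the \emph{statement} of Theorem~\ref{thm:main}, your argument is valid. It does not, however, prove the corollary in this paper, because the paper never proves the bipartite half of Theorem~\ref{thm:main}(ii) independently. Section~\ref{sec:proof} first proves Corollary~\ref{cor:mainexist} for every even triangulation of $\RP^d$. It then uses that corollary to make part~(i) non-vacuous and to supply the bipartite quadrangulation in part~(ii). The covering-space construction at the end of Section~\ref{sec:proof} produces only the non-bipartite one. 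So citing Theorem~\ref{thm:main}(ii) is circular. The real content of the statement is why an \emph{unbalanced} even triangulation of $\RP^d$ has a weak $2$-coloring, and your proposal assumes this rather than proves it.

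The missing idea is the monodromy orbit argument.
\begin{itemize}
\item Since $\pi_1(\RP^d)\cong\ZZ_2$, the image of $\mu_G$ is generated by a single $\tau\in S_{d+1}$ with $\tau^2=1$.
\item Hence every orbit of $\im\mu_G$ on $[d+1]$ has size at most $2$. Because $d+1\ge 3$, there are at least two orbits $O_1,\dots,O_k$.
\item Lemma~\ref{lem:orbitpartition} turns these orbits into a vertex partition $V(G)=V_1\sqcup\cdots\sqcup V_k$ with $|V_i\cap V(\sigma)|=|O_i|\ge 1$ for every facet $\sigma$.
\item Coloring $V_1$ with $0$ and all other vertices with $1$ is therefore a weak $2$-coloring, and Proposition~\ref{prop:weak-equiv} finishes the proof.
\end{itemize}
This argument also makes your case split unnecessary. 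When $G$ is balanced, $\tau$ is trivial, the orbits are singletons, and the resulting coloring is exactly your $K_{1,d}$ coloring.
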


Our proof uses \emph{coloring monodromy}, a homomorphism
$\mu_G:\pi_1(M)\to S_{d+1}$ associated with an even
triangulation $G$ of a connected closed $d$-manifold $M$.
We introduce this notion in Section~\ref{sec:monodromy}.
For projective spaces, the orbits of the monodromy
action induce a partition of $V(G)$ that yields
a bipartite quadrangulation, while a covering-space
argument is used for the non-bipartite case.

After proving our main theorem in Section~3,
we present further observations on quadrangulations
of projective spaces and discuss extensions to
general closed manifolds in Section~\ref{sec:general}.
In particular, we show that every even triangulation
of $\RP^3$ admits a spanning bipartite quadrangulation
that induces $K_{2,2}$ on each tetrahedron,
giving rise to a quadrangulated Heegaard surface.
We then extend our approach beyond projective spaces
to more general closed manifolds.
In particular, we obtain the following sufficient
condition for the existence of a spanning
bipartite quadrangulation.

\begin{theorem}\label{thm:general}
Let $G$ be an even triangulation of a connected
closed $d$-manifold $M$, where $d\ge2$.
If the image of $\mu_G$ does not act transitively
on $[d+1]$, then $G$ has
a spanning bipartite quadrangulation.
\end{theorem}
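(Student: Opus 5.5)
We need to define coloring monodromy ourselves, since Section~\ref{sec:monodromy} has not appeared yet, and then extract a weak $2$-coloring. By Proposition~\ref{prop:weak-equiv}, it suffices to produce a weak $2$-coloring of $G$, that is, a map $V(G)\to\{0,1\}$ under which no facet is monochromatic.

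\textbf{Setting up the monodromy.} Fix a base facet $\sigma_0$ and a bijection $\lambda_0:V(\sigma_0)\to[d+1]$.
\begin{itemize}
\item Consider the dual graph, whose vertices are facets and whose edges join facets sharing a $(d-1)$-face. Along a walk in this graph, a coloring of a facet propagates to the next facet uniquely. The shared $d$ vertices keep their colors, and the new vertex receives the color of the vertex it replaces.
\item Evenness says that going around any $(d-2)$-face returns the original labeling, since the two free colors alternate an even number of times. Loops around codimension-two faces generate the kernel of $\pi_1$ of the dual graph onto $\pi_1(M)$, because $M$ minus its codimension-$\ge 3$ skeleton has the same $\pi_1$.
\item Hence transport around a closed dual walk depends only on its class in $\pi_1(M)$. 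This defines $\mu_G:\pi_1(M)\to S_{d+1}$: the composite of the returned labeling of $\sigma_0$ with $\lambda_0^{-1}$.
\end{itemize}

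\textbf{The key invariant.} Let $H=\im\mu_G$. A vertex $v$ lies in some facet $\sigma$, and transport along a dual path from $\sigma_0$ to $\sigma$ assigns $v$ a label in $[d+1]$. Different paths change this label only by an element of $H$. Facets containing $v$ are connected through facets containing $v$, because the link of $v$ is a connected sphere. Along such paths, $v$ persists in every facet and its label is unchanged. Therefore each vertex $v$ has a well-defined $H$-orbit $O(v)\subseteq[d+1]$.

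\textbf{The coloring.} By hypothesis $H$ is not transitive, so there is an $H$-invariant subset $A$ with $\emptyset\ne A\subsetneq[d+1]$; take $A$ to be one orbit. Define $c(v)=0$ if $O(v)\subseteq A$ and $c(v)=1$ otherwise.

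Fix a facet $\sigma$ and one transported labeling of it. This labeling is a bijection $V(\sigma)\to[d+1]$, and each vertex's label lies in its orbit. Since $A$ and its complement are both nonempty, some vertex of $\sigma$ has label in $A$ and some has label outside $A$. Orbits partition $[d+1]$, so $O(v)\subseteq A$ exactly when the label of $v$ lies in $A$. Hence $\sigma$ receives both colors, and $c$ is a weak $2$-coloring.

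\textbf{Main obstacle.} The delicate step is the well-definedness of $\mu_G$ on $\pi_1(M)$, not merely on $\pi_1$ of the dual graph. This requires that every null-homotopic dual loop be a product of conjugates of loops around $(d-2)$-faces. I would argue it by general position: $|T|$ minus its $(d-3)$-skeleton deformation retracts appropriately onto a neighbourhood of the dual $2$-skeleton, whose $2$-cells are exactly the links of $(d-2)$-faces. By van Kampen, $\pi_1(M)$ is then $\pi_1$ of the dual graph modulo these cycles, and evenness makes each of them act trivially.
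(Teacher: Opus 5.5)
Your proposal is correct and takes essentially the same route as the paper. The paper also uses the orbit partition of $V(G)$ from Lemma~\ref{lem:orbitpartition}: every vertex carries a well-defined $\im\mu_G$-orbit, and each facet meets the $i$-th class in exactly $|O_i|$ vertices. It then colors one orbit class $0$ and the rest $1$, and invokes Proposition~\ref{prop:weak-equiv}, just as you do.

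Your explicit check that the facets containing a vertex $v$ are connected through facets containing $v$ is a detail the paper's lemma uses only implicitly. Your discussion of the well-definedness of $\mu_G$ matches the paper's dual-$2$-cell argument in Section~\ref{sec:monodromy}.
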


As a consequence, we obtain a sufficient condition
depending only on the fundamental group of $M$.

\begin{corollary}\label{cor:index}
Let $M$ be a connected closed triangulable
$d$-manifold, where $d\ge2$.
If $\pi_1(M)$ has no subgroup of index $d+1$,
then every even triangulation of $M$
has a spanning bipartite quadrangulation.
\end{corollary}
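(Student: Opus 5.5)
We derive Corollary~\ref{cor:index} from Theorem~\ref{thm:general} by the orbit--stabilizer theorem. Fix an even triangulation $G$ of $M$ and let $H=\im\mu_G\le S_{d+1}$ be the image of the coloring monodromy $\mu_G:\pi_1(M)\to S_{d+1}$. By Theorem~\ref{thm:general}, it suffices to show that $H$ does not act transitively on $[d+1]$. We argue by contradiction.

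Suppose $H$ acts transitively on $[d+1]$. Then $\pi_1(M)$ also acts transitively on $[d+1]$ through $\mu_G$, namely by $g\cdot i=\mu_G(g)(i)$. Fix the point $1\in[d+1]$ and consider its stabilizer
\[
K=\{g\in\pi_1(M): \mu_G(g)(1)=1\}.
\]
This is a subgroup of $\pi_1(M)$. By the orbit--stabilizer theorem applied to this action, the cosets of $K$ correspond bijectively to the orbit of $1$ via $gK\mapsto \mu_G(g)(1)$. This orbit is all of $[d+1]$, so
\[
[\pi_1(M):K]=d+1 .
\]
This contradicts the hypothesis that $\pi_1(M)$ has no subgroup of index $d+1$. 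Hence $H$ is not transitive, and Theorem~\ref{thm:general} gives a spanning bipartite quadrangulation of $G$.

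The argument needs no finiteness assumption on $\pi_1(M)$, since the orbit--stabilizer bijection holds for any group action. The one point to check is that $\mu_G$ is a genuine homomorphism. For that, we rely on the construction in Section~\ref{sec:monodromy}: $\mu_G$ should be a well-defined homomorphism after fixing a base facet and a labelling of its vertices by $[d+1]$, with any basepoint or labelling ambiguity changing $\mu_G$ only by conjugation. Transitivity is invariant under conjugation, so the conclusion does not depend on these choices. I expect no real obstacle here.
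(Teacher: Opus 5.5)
Your proof is correct and follows essentially the same route as the paper. The paper also assumes transitivity and shows that the stabilizer in $\pi_1(M)$ of a fixed color has index $d+1$; it checks the coset correspondence directly by choosing elements $g_j$ sending $i$ to $j$, which is the orbit--stabilizer argument you use. This contradicts the hypothesis, so Theorem~\ref{thm:general} applies.
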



\section{Coloring monodromy}\label{sec:monodromy}

In this section, we construct the monodromy representation explicitly from facet walks and permutations.
The key topological structure is the dual cell structure of a triangulated manifold.
Coloring monodromy and related concepts have been studied in both higher-dimensional settings \cite{Fisk,IKN,Joswig,RT} and surface triangulations \cite{HNOS,HRS,KNS,Mohar,NNOextension,Noguchi2026}.
We recall the construction because its precise form is central to our proofs.

Let $G$ be a triangulation of a connected closed
$d$-manifold $M$, with underlying simplicial complex $T$.
Two facets $\sigma$ and $\sigma'$ of $T$ are
\emph{adjacent} if they share a $(d-1)$-face.
Let $x$ and $x'$ be the unique vertices in
$V(\sigma)\setminus V(\sigma')$ and
$V(\sigma')\setminus V(\sigma)$, respectively.
There is a canonical bijection
$\varphi_{\sigma\sigma'}:V(\sigma)\longrightarrow V(\sigma')$
that fixes all vertices in $\sigma\cap\sigma'$ and maps
$x$ to $x'$.
Thus, if the vertices of $\sigma$ are colored with
the $d+1$ distinct colors in
$[d+1]:=\{1,\ldots,d+1\}$,
there is a unique coloring of $\sigma'$ that agrees
with the original coloring on their common face.
We call this operation \emph{color transport}
across their common $(d-1)$-face.

The \emph{dual graph} $D(T)$ has the facets of $T$
as vertices, with an edge for each pair of adjacent facets.
A \emph{facet walk} is a sequence
$W=(\sigma_0,\ldots,\sigma_m)$
in which consecutive facets are adjacent.
Composing the above bijections gives a bijection
$\varphi_W:V(\sigma_0)\to V(\sigma_m)$.
Fixing a facet $\sigma_*$ and an initial bijective coloring
$c_*:V(\sigma_*)\to[d+1]$,
a closed facet walk $W$ based at $\sigma_*$
determines a permutation of $[d+1]$
by comparing the initial and final color assignments.
More explicitly, the coloring transported back to
$\sigma_*$ is $c_*\circ\varphi_W^{-1}$,
and we associate to $W$ the permutation
$c_*\circ\varphi_W^{-1}\circ c_*^{-1}$.
With the convention that the product of two walks
traverses the first walk and then the second,
this assignment respects composition.

For a face $\rho$ of $T$, its \emph{link}
$\operatorname{lk}_T(\rho)$ is the simplicial complex
consisting of all faces $\eta$ of $T$ such that
$\eta\cap\rho=\emptyset$ and $\rho\cup\eta$
is a face of $T$.
Since $M$ is a closed manifold,
the link of every $(d-2)$-face is a cycle.

Inserting or deleting a backtrack
$(\sigma,\sigma',\sigma)$
in a facet walk does not change the induced permutation.
The effect of going once around a $(d-2)$-face
is described by the following lemma.

\begin{lemma}\label{lem:localparity}
Let $G$ be a triangulation of a connected closed
$d$-manifold $M$, and let $\rho$ be a $(d-2)$-face
of its underlying simplicial complex $T$.
Let $m$ be the degree of $\rho$.
Transporting a local $(d+1)$-coloring once around
the facets containing $\rho$ induces the identity
permutation when $m$ is even and a transposition
when $m$ is odd.
In the latter case, the transposition interchanges
the two colors absent from $\rho$.
\end{lemma}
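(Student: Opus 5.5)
The plan is to compute the transport explicitly by tracking vertex sets. Write $\rho$ with vertex set $R$, $|R|=d-1$. Since $\operatorname{lk}_T(\rho)$ is a cycle of length $m$, say with vertices $v_0,v_1,\ldots,v_{m-1}$ in cyclic order (indices mod $m$), the facets containing $\rho$ are $\sigma_i=\rho\cup\{v_i,v_{i+1}\}$. Consecutive facets $\sigma_{i-1}$ and $\sigma_i$ share the $(d-1)$-face $\rho\cup\{v_i\}$. The closed facet walk $W=(\sigma_0,\sigma_1,\ldots,\sigma_{m-1},\sigma_0)$ is the walk "once around $\rho$". We start from an arbitrary bijective coloring $c_0$ of $\sigma_0$ and need $c_*\circ\varphi_W^{-1}\circ c_*^{-1}$ with $\sigma_*=\sigma_0$. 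Equivalently, we compare $c_0$ with the coloring obtained after transporting all the way around.

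First, every bijection $\varphi_{\sigma_{i-1}\sigma_i}$ fixes $R$ pointwise, because $R$ lies in every common face. So the colors on $\rho$ never change, and the two remaining colors $\alpha=c_0(v_0)$ and $\beta=c_0(v_1)$ are carried by the two link vertices of the current facet. Next, crossing from $\sigma_{i-1}=\rho\cup\{v_{i-1},v_i\}$ to $\sigma_i$, the vertex $v_i$ keeps its color and $v_{i+1}$ receives the color that $v_{i-1}$ had.

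By induction on $i$, the transported coloring $c_i$ of $\sigma_i$ has
\[
c_i(v_i)=\begin{cases}\alpha & i \text{ even},\\ \beta & i\text{ odd},\end{cases}
\qquad
c_i(v_{i+1})=\begin{cases}\beta & i \text{ even},\\ \alpha & i\text{ odd}.\end{cases}
\]
In other words, the link vertices alternate colors $\alpha,\beta,\alpha,\beta,\ldots$ along the cycle, with $v_j$ receiving $\alpha$ if $j$ is even and $\beta$ if $j$ is odd. The last step crosses from $\sigma_{m-1}$ back to $\sigma_0$ through $\rho\cup\{v_0\}$, and $v_0=v_m$ inherits the color of $v_{m-2}$. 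The returned coloring therefore assigns to $v_0$ the parity color of $m$, and to $v_1$ the color that $v_{m-1}$ carried, which is the parity color of $m-1$.

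If $m$ is even, $v_0\mapsto\alpha$ and $v_1\mapsto\beta$, so the coloring agrees with $c_0$ and the permutation is the identity. If $m$ is odd, the colors $\alpha$ and $\beta$ are swapped while the colors on $R$ are unchanged, so the permutation is the transposition $(\alpha\ \beta)$ of the two colors absent from $\rho$.

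Finally, the permutation does not depend on where on the cycle we start, on the direction of traversal, or on how the walk is conjugated to the base facet $\sigma_*$. Changing the starting facet or the conjugating path conjugates the permutation, and the identity and transpositions are preserved up to conjugacy; reversing the direction inverts it. This holds in the precise sense that, relative to the local coloring, the swapped colors are those absent from $\rho$. The only real obstacle is bookkeeping: getting the indices right in the wrap-around step, and checking $m\ge 3$ so that the link cycle is a genuine cycle with distinct $v_i$, which the closed-manifold hypothesis guarantees.
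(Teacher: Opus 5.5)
Your proof is correct and follows the same approach as the paper: every crossing fixes the colors on $\rho$, and the two colors absent from $\rho$ alternate along the link cycle, so after $m$ steps they are restored or swapped according to the parity of $m$. You simply carry out the index bookkeeping that the paper leaves implicit. One harmless wording slip: $v_0=v_m$ acquires the color of $v_{m-2}$ on entering $\sigma_{m-1}$, not in the last step; in the final crossing into $\sigma_0$ it keeps that color while $v_1$ takes the color of $v_{m-1}$, and your resulting formulas are correct.
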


\begin{proof}
The link of $\rho$ is a cycle of length $m$,
whose edges correspond to the facets containing $\rho$.
The $d-1$ vertices of $\rho$ keep their colors
during transport, while the two remaining colors
alternate as one moves from one incident facet
to the next.
After $m$ steps, these two colors return to their
original positions if $m$ is even, and are exchanged
if $m$ is odd.
\end{proof}

We now assume that $G$ is even.
We explain why the permutations associated with closed
facet walks depend only on their homotopy classes.
The dual graph $D(T)$ can be regarded as the $1$-skeleton
of the cell decomposition dual to $T$,
which can be constructed using the barycentric subdivision.
For each $(d-2)$-face $\rho$ of $T$,
the corresponding dual $2$-cell has as its boundary
the closed facet walk around $\rho$.
The dual $2$-skeleton has the same fundamental group
as $M$, since attaching cells of dimension at least three
does not change the fundamental group.

Consequently, homotopies of closed facet walks
are generated by inserting or deleting backtracks
and by moving across dual $2$-cells.
As observed above, backtracks do not affect color transport.
Moreover, since $G$ is even,
Lemma~\ref{lem:localparity} shows that transporting colors
around the boundary of any dual $2$-cell
induces the identity permutation.
Thus, the permutation associated with a closed facet walk
depends only on its homotopy class.

It follows that color transport induces a well-defined
homomorphism
\[
\mu_G:\pi_1(M)\longrightarrow S_{d+1},
\]
called the \emph{coloring monodromy} of $G$.
Here the base point of $\pi_1(M)$ is chosen in the
interior of the initial facet $\sigma_*$.
Changing the initial coloring $c_*$ conjugates
$\mu_G$ by an element of $S_{d+1}$.
Changing the initial facet $\sigma_*$ also gives
a conjugate representation, after identifying
the fundamental groups along a path.
Consequently, the triviality of $\mu_G$ and
the orbit structure of its image are independent
of these choices.

The following equivalence is well known;
see, for example, \cite{Joswig,IKN}.

\begin{proposition}\label{prop:balanced}
Let $G$ be an even triangulation of a connected
closed $d$-manifold $M$.
Then $G$ is balanced if and only if
$\mu_G$ is trivial.
\end{proposition}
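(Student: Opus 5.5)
The plan is to prove both directions by relating proper $(d+1)$-colorings of $G$ to color transport along facet walks.

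For the forward direction, suppose $G$ is balanced, and fix a proper coloring $c:V(G)\to[d+1]$. Every facet spans a $K_{d+1}$, so the restriction of $c$ to any facet is a bijection onto $[d+1]$. Choose the initial coloring $c_*=c|_{V(\sigma_*)}$. Next, check that color transport preserves $c$. If $\sigma,\sigma'$ are adjacent and we transport $c|_{V(\sigma)}$ across their common face, the result agrees with $c|_{V(\sigma')}$: both colorings agree on the $d$ shared vertices, and each is a bijection onto $[d+1]$, so $x'$ must receive the unique remaining color, which is $c(x)$. Therefore $c\circ\varphi_{\sigma\sigma'}=c$ on $V(\sigma)$. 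By induction along any closed facet walk $W$ based at $\sigma_*$, we get $c_*\circ\varphi_W=c_*$. Hence the associated permutation $c_*\circ\varphi_W^{-1}\circ c_*^{-1}$ is the identity. Every element of $\pi_1(M)$ is represented by such a walk, because the dual $1$-skeleton carries $\pi_1(M)$. So $\mu_G$ is trivial.

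For the converse, suppose $\mu_G$ is trivial. Define $c(v)$ for each vertex $v$ as follows. Choose a facet $\sigma\ni v$ and a facet walk $W$ from $\sigma_*$ to $\sigma$, which exists because $D(T)$ is connected since $M$ is connected. Set
\[
c(v)=c_*\bigl(\varphi_W^{-1}(v)\bigr).
\]
Independence of $W$ for a fixed $\sigma$ follows from triviality of $\mu_G$. If $W,W'$ both end at $\sigma$, then $W\cdot\overline{W'}$ is a closed walk, and its permutation is trivial. This gives $c_*\circ\varphi_W^{-1}=c_*\circ\varphi_{W'}^{-1}$ on $V(\sigma)$.

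The main obstacle is independence of the choice of facet $\sigma\ni v$. To handle it, use that the facets containing $v$ form a connected set in the dual graph: the star of $v$ is a $d$-ball whose link is a connected closed $(d-1)$-manifold (a sphere), so its facets are connected through $(d-1)$-faces containing $v$. Then we may pass from one facet containing $v$ to another through adjacent facets that all share $v$. Each such step is a transport fixing $v$, and extending $W$ by these steps shows that the color of $v$ does not change.

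Finally, $c$ is proper on each facet. For a facet $\sigma$, $c|_{V(\sigma)}=c_*\circ\varphi_W^{-1}$ is a bijection onto $[d+1]$. Every edge of $G$ lies in some facet, so its endpoints receive distinct colors. Thus $\chi(G)\le d+1$, and together with $\chi(G)\ge d+1$ this shows that $G$ is balanced.
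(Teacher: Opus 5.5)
Your proof is correct and follows the paper's argument: a proper $(d+1)$-coloring is invariant under color transport, and conversely, when $\mu_G$ is trivial, transporting $c_*$ from $\sigma_*$ gives walk-independent local colorings that glue to a proper coloring. Your explicit check that a vertex's color does not depend on which facet containing it is used fills in a point the paper leaves implicit.

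One small caveat concerns your justification for that check. The paper allows arbitrary (not necessarily combinatorial) triangulations, and for $d\ge5$ a vertex star need not be a ball, nor its link a sphere. The link is still a connected homology $(d-1)$-manifold, so any two facets containing $v$ are joined by a chain of facets containing $v$ in which consecutive ones share a $(d-1)$-face, and that is all your argument uses.
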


\begin{proof}
If $G$ is balanced, a proper $(d+1)$-coloring
is preserved under color transport along any
facet walk. Hence $\mu_G$ is trivial.

Conversely, suppose that $\mu_G$ is trivial.
Fix a facet $\sigma_*$ and a bijective coloring
of its vertices.
Since the dual graph is connected, this coloring
can be transported to every facet.
The triviality of $\mu_G$ ensures that the resulting
coloring is independent of the chosen facet walk.
These local colorings therefore define a proper
$(d+1)$-coloring of $G$, showing that $G$ is balanced.
\end{proof}

\begin{corollary}\label{cor:simplyconn}
An even triangulation of a simply connected closed $d$-manifold is balanced. In particular, a triangulation of $S^d$ is even if and only if it is balanced.
\end{corollary}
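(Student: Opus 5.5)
The first assertion follows at once from Proposition~\ref{prop:balanced}. If $M$ is simply connected, then $\pi_1(M)$ is trivial, so any homomorphism $\pi_1(M)\to S_{d+1}$ is trivial; in particular $\mu_G$ is trivial. Proposition~\ref{prop:balanced} then says that $G$ is balanced.

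Before invoking this, I will check that $\mu_G$ really is defined for every even triangulation, independently of any balancedness assumption. This holds by the discussion preceding Proposition~\ref{prop:balanced}. Homotopies of closed facet walks are generated by two moves: inserting or deleting backtracks, and sliding across dual $2$-cells. Backtracks do not change the transported coloring. The boundary of each dual $2$-cell is the facet walk around some $(d-2)$-face $\rho$, and by Lemma~\ref{lem:localparity} it induces the identity because $\deg\rho$ is even. Consequently the transported coloring depends only on the homotopy class of the walk. When $\pi_1(M)=1$, every closed facet walk is null-homotopic, so transport around any closed walk is the identity.

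For the second assertion, I take $M=S^d$ with $d\ge2$, which is simply connected. The first part then shows that every even triangulation of $S^d$ is balanced. For the converse, the introduction already showed that every balanced triangulation is even. Around a $(d-2)$-face, the two colors missing from $\rho$ must alternate along the cyclic sequence of incident facets, which forces an even number of such facets. Combining the two directions gives the equivalence.

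I do not expect a real obstacle here. The only point needing care is the topological input that the dual $2$-skeleton carries $\pi_1(M)$, which is already recorded in the text. One should also note that the standing hypothesis $d\ge2$ is what makes $S^d$ simply connected.
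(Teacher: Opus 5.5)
Your proof is correct and is essentially the paper's argument: trivial $\pi_1(M)$ makes $\mu_G$ trivial, Proposition~\ref{prop:balanced} then gives balancedness, and the converse on $S^d$ is the general fact that balanced triangulations are even. Your recheck that $\mu_G$ is well defined for even triangulations only restates the discussion preceding Proposition~\ref{prop:balanced}, so it adds nothing essential.
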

\begin{proof}
If $M$ is simply connected, then $\pi_1(M)$ is trivial, and hence so is $\mu_G$. Apply Proposition~\ref{prop:balanced}, and recall that balancedness always implies evenness.
\end{proof}

Let $\Gamma=\im\mu_G\le S_{d+1}$.
For each $i\in[d+1]$, the \emph{orbit} of $i$
under $\Gamma$ is defined by
\[
O(i)=\{\gamma(i):\gamma\in\Gamma\}.
\]
The orbits of $\Gamma$ partition $[d+1]$.
As we show below, this partition induces
a partition of the vertices of $G$.

\begin{lemma}\label{lem:orbitpartition}
Let $G$ be an even triangulation of a connected closed
$d$-manifold $M$, and let $T$ be its underlying
simplicial complex.
If $O_1,\ldots,O_k$ are the orbits of
$\im\mu_G$ on $[d+1]$, then there is a partition
\[
V(G)=V_1\sqcup\cdots\sqcup V_k
\]
such that
\[
|V_i\cap V(\sigma)|=|O_i|
\]
for every facet $\sigma$ of $T$ and every $i\in[k]$.
\end{lemma}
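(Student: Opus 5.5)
We transport the base coloring $c_*$ along facet walks and keep, for each vertex, only the \emph{orbit} of the color it receives. For a facet $\sigma$ and a facet walk $W$ from $\sigma_*$ to $\sigma$, the transported coloring is $c_W:=c_*\circ\varphi_W^{-1}:V(\sigma)\to[d+1]$, which is a bijection. We define $V_i$ to be the set of vertices $v$ such that, for some facet $\sigma\ni v$ and some walk $W$ from $\sigma_*$ to $\sigma$, we have $c_W(v)\in O_i$. Once this assignment of an orbit index to each vertex is shown to be well defined, the count follows immediately. Indeed, on each facet $\sigma$ the map $c_W$ is a bijection onto $[d+1]$. Hence exactly $|O_i|$ vertices of $\sigma$ receive colors in $O_i$, so $|V_i\cap V(\sigma)|=|O_i|$. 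Every vertex lies in some facet and $D(T)$ is connected, so the sets $V_i$ cover $V(G)$. Once well-definedness holds, they are disjoint.

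The main step is well-definedness. We have to show that the orbit $O(c_W(v))$ depends neither on the walk $W$ nor on the facet $\sigma$ containing $v$.

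First fix $\sigma$ and take two walks $W,W'$ from $\sigma_*$ to $\sigma$. Then $W'W^{-1}$, meaning $W'$ followed by the reverse of $W$, is a closed walk at $\sigma_*$. We get
\[
c_{W'}\circ c_W^{-1}=c_*\circ\varphi_{W'}^{-1}\circ\varphi_W\circ c_*^{-1},
\]
which is the permutation associated to a closed facet walk based at $\sigma_*$, possibly up to inversion depending on the orientation convention. In either case it is an element $\gamma\in\Gamma=\im\mu_G$. This uses the well-definedness of $\mu_G$, which in turn relies on evenness through Lemma~\ref{lem:localparity}. It follows that $c_{W'}(v)=\gamma(c_W(v))$ lies in the same orbit as $c_W(v)$.

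Next, for independence of $\sigma$, take two facets $\sigma,\sigma'$ containing $v$. Since $M$ is a closed manifold, the facets containing $v$ are connected through adjacencies across $(d-1)$-faces containing $v$, because the link of $v$ is a connected pseudomanifold of dimension $d-1\ge1$. So we can go from $\sigma$ to $\sigma'$ by a walk $U$ consisting only of facets containing $v$. Color transport fixes every vertex of the shared $(d-1)$-face, and $v$ lies in each shared face along $U$. Therefore $c_{WU}(v)=c_W(v)$. Combined with walk-independence at $\sigma'$, this gives the claim.

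I expect the only delicate points to be the following. The first is the bookkeeping of composition order in identifying $c_{W'}\circ c_W^{-1}$ with an element of $\Gamma$; this is harmless because $\Gamma$ is a group, so inverses do not matter. The second is the justification that the star of a vertex is connected in the dual graph, which I would cite from the manifold property of links.
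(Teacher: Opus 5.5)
Your proposal is correct and follows the paper's route: transport $c_*$, note that two walks to the same facet differ by an element of $\im\mu_G$ so each vertex's orbit label is well defined, then count using the bijectivity of each transported coloring on a facet. Your explicit argument that two facets containing $v$ are joined by a walk inside the star of $v$ spells out a step the paper compresses into ``the orbit labels agree on overlaps.''
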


\begin{proof}
Fix an initial coloring $c_*$ of a facet $\sigma_*$.
For each facet $\sigma$, transport $c_*$ along a
facet walk from $\sigma_*$ to $\sigma$.
Different choices of the walk give colorings that
differ by a permutation in $\im\mu_G$.
Thus, although the color of a vertex may depend on
the walk, the orbit containing that color does not.
Moreover, transport across adjacent facets preserves
the colors of their common vertices.
Hence the orbit labels agree on overlaps and define
a partition $V(G)=V_1\sqcup\cdots\sqcup V_k$.
Since each transported coloring uses all $d+1$
colors exactly once on every facet, the asserted
cardinalities follow.
\end{proof}

In particular, any nonempty proper union of the sets
$V_1,\ldots,V_k$ defines a weak $2$-coloring of $G$.
We will use this observation both for projective spaces
and for general closed manifolds.

\section{Proof of the main theorem}\label{sec:proof}

Let $G$ be an even triangulation of $\RP^d$,
where $d\ge2$, with underlying simplicial complex $T$.
We first use the orbit partition from
Lemma~\ref{lem:orbitpartition} to obtain a bipartite
quadrangulation. The non-bipartite construction
will require an additional covering-space argument.

\subsection{Existence of a spanning bipartite quadrangulation}

Since $\pi_1(\RP^d)\cong\ZZ_2$, the image of
$\mu_G$ is generated by a permutation
$\tau\in S_{d+1}$ satisfying $\tau^2=1$.
Every orbit of $\tau$ has size at most two.
Since $d+1\ge3$, there must be at least two
orbits, and hence $\im\mu_G$ does not act
transitively on $[d+1]$.

Let $O_1,\ldots,O_k$ be these orbits.
By Lemma~\ref{lem:orbitpartition}, they induce
a partition $V(G)=V_1\sqcup\cdots\sqcup V_k$
such that each facet contains exactly $|O_i|$
vertices from $V_i$.
Since $k\ge2$, assigning color $0$ to $V_1$
and color $1$ to all remaining vertices gives
a weak $2$-coloring of $G$.
Proposition~\ref{prop:weak-equiv} therefore yields
a spanning bipartite quadrangulation.
This proves Corollary~\ref{cor:mainexist}.

\subsection{The balanced case}

Suppose that $G$ is balanced.
Then $\chi(G)=d+1$, and hence
$\chi(Q)\le d+1$ for every spanning subgraph
$Q$ of $G$.
If $Q$ were a non-bipartite quadrangulation
of $G$, Theorem~\ref{thm:KS} would imply
$\chi(Q)\ge d+2$, a contradiction.
Thus every spanning quadrangulation of $G$
is bipartite.
Since the existence of a spanning bipartite
quadrangulation was established above,
this proves part~(i) of Theorem~\ref{thm:main}.

\subsection{Constructing a non-bipartite quadrangulation in the non-balanced case}

We first record two elementary facts about
bichromatic-edge graphs and graph coverings.

\begin{lemma}\label{lem:edgeconstruction}
Let $G$ be a triangulation of a connected
$d$-manifold, where $d\ge2$, and let
$c:V(G)\to\{0,1\}$ be a weak $2$-coloring.
Then the spanning subgraph $Q_c$ obtained by deleting
all edges whose endpoints receive the same color
is a connected bipartite quadrangulation of $G$.
\end{lemma}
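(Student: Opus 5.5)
The quadrangulation part is already contained in Proposition~\ref{prop:weak-equiv}. In its proof, $Q_c$ is exactly the graph of bichromatic edges, and on each facet $\sigma$ it induces $K_{a,d+1-a}$ with $1\le a\le d$. It is bipartite with bipartition $(c^{-1}(0),c^{-1}(1))$, because every edge of $Q_c$ joins the two classes. So the only new content is connectedness, and I will spend the argument on that.

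The plan is to show that all vertices lie in one component of $Q_c$ by moving through facets. First, within a single facet $\sigma$, the graph $Q_c[V(\sigma)]$ is a complete bipartite graph with both sides nonempty, and such a graph is connected. Hence all vertices of $\sigma$ lie in a single component of $Q_c$.

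Second, adjacent facets $\sigma$ and $\sigma'$ share a $(d-1)$-face, which contains $d\ge 2$ vertices and in particular is nonempty. By the first step, all vertices of $\sigma$ lie in one component and all vertices of $\sigma'$ lie in one component. A common vertex links the two, so $V(\sigma)\cup V(\sigma')$ lies in a single component.

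Third, I use that the dual graph $D(T)$ is connected. This is the one point needing justification. $M$ is a connected closed manifold, so $T$ is strongly connected, as is standard for pseudomanifolds. The paper already relies on this fact in Proposition~\ref{prop:balanced} and Lemma~\ref{lem:orbitpartition}. By induction along a facet walk, every facet's vertex set lies in the component containing $V(\sigma_*)$. Every vertex of $G$ lies in some facet, since $T$ is pure $d$-dimensional, so $Q_c$ is connected and spanning.

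The main (mild) obstacle is the strong connectivity of $T$. If a direct argument is wanted, I would argue as follows. The union of the closed facets reachable from $\sigma_*$ in $D(T)$ is a closed subset of $|T|$. It is also open: any point lies in the interior of a simplex of dimension at least $d-1$, or in the star of a face of codimension at least $2$. In the latter case, connectedness of links in a manifold (links of faces of dimension at most $d-2$ are connected spheres for $d\ge 2$) shows that all facets around the point are mutually reachable. Connectedness of $M$ then finishes the argument.
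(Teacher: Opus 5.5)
Your proof is correct and matches the paper's: Proposition~\ref{prop:weak-equiv} gives the bipartite quadrangulation, each $Q_c[V(\sigma)]$ is a connected complete bipartite graph, and connectivity of $D(T)$ together with shared vertices of adjacent facets makes $Q_c$ connected. One small caution on your optional direct argument: around a face $\rho$ of dimension at most $d-2$ you need the link to be \emph{strongly} connected, not merely connected. In general the link is only a homology sphere, but strong connectivity still follows by induction on dimension, since $\operatorname{lk}_{\operatorname{lk}_T(\rho)}(\eta)=\operatorname{lk}_T(\rho\cup\eta)$. That argument also does not need closedness, which matters because the lemma is later applied to universal covers that may be non-compact.
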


\begin{proof}
By Proposition~\ref{prop:weak-equiv},
$Q_c$ is a spanning bipartite quadrangulation of $G$.
For every facet $\sigma$ of the underlying
simplicial complex $T$, the graph $Q_c[V(\sigma)]$
is connected.
Since the dual graph $D(T)$ is connected
and adjacent facets share vertices,
it follows that $Q_c$ is connected.
\end{proof}

A \emph{deck transformation} of a covering map
$p:\widetilde X\to X$ is an automorphism
$\gamma$ of $\widetilde X$ satisfying
$p\circ\gamma=p$.
For the universal cover of a manifold $M$,
the deck transformation group can be identified
with $\pi_1(M)$.

\begin{lemma}\label{lem:oddquotient}
Let $p:H\to Q$ be a graph covering,
where $H$ is connected and bipartite.
If some deck transformation of $p$ interchanges
the two partite sets of $H$, then $Q$ is not bipartite.
\end{lemma}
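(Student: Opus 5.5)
We argue by contradiction, supposing that $Q$ is bipartite with a proper $2$-coloring $\beta:V(Q)\to\{0,1\}$. The idea is to pull $\beta$ back to $H$ and compare it with a bipartition of $H$ that the deck transformation is assumed to swap.

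First we fix notation. Since $H$ is connected and bipartite, its bipartition $(A,B)$ is unique up to swapping the two classes. Let $\gamma$ be a deck transformation with $\gamma(A)=B$ and $\gamma(B)=A$.

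Next we use the covering to transfer the coloring. Define $\tilde\beta=\beta\circ p$ on $V(H)$. A graph covering maps edges to edges, so every edge $uv$ of $H$ gives an edge $p(u)p(v)$ of $Q$. Hence $\tilde\beta(u)\neq\tilde\beta(v)$, and $\tilde\beta$ is a proper $2$-coloring of $H$. Because $H$ is connected, its proper $2$-colorings are exactly the two labelings of its bipartition. So $\tilde\beta$ is constant on $A$ and on $B$, with different values on the two classes. Say $\tilde\beta\equiv 0$ on $A$ and $\tilde\beta\equiv 1$ on $B$, after relabeling colors if necessary.

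Finally we derive the contradiction. Choose any vertex $a\in A$. Since $p\circ\gamma=p$, we get $\tilde\beta(\gamma(a))=\beta(p(\gamma(a)))=\beta(p(a))=\tilde\beta(a)=0$. On the other hand, $\gamma(a)\in B$, so $\tilde\beta(\gamma(a))=1$. These two values disagree, so $Q$ is not bipartite.

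The only real input is the uniqueness of the bipartition of a connected bipartite graph, and that is standard. There is no genuine obstacle here; the step to state carefully is that the covering map sends edges to edges, which is exactly what makes $\tilde\beta$ proper.
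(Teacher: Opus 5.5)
Your proof is correct and is essentially the paper's argument in contrapositive form. The paper takes a path in $H$ from $v$ to $\gamma v$, which has odd length because its endpoints lie in different partite sets, and projects it to an odd closed walk in $Q$; you instead pull a hypothetical proper $2$-coloring of $Q$ back to $H$ and invoke uniqueness of the bipartition of a connected graph, which is the same path-parity fact, and both arguments rest on the same inputs (connectivity of $H$, $p\circ\gamma=p$, and $p$ mapping edges to edges).
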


\begin{proof}
Let $\gamma$ be such a deck transformation
and choose a vertex $v$ of $H$.
Since $H$ is connected, there is a path $P$
from $v$ to $\gamma v$.
As these vertices belong to different partite sets,
$P$ has odd length.
Since $p(v)=p(\gamma v)$,
the image of $P$ under $p$ is a closed walk
of odd length in $Q$.
Therefore, $Q$ is not bipartite.
\end{proof}

We now relate coloring monodromy to the universal
cover of the triangulated manifold.
This relation will also be used in
Section~\ref{sec:general}.

\begin{proposition}\label{prop:cover}
Let $G$ be an even triangulation of a connected
closed $d$-manifold $M$, and let
$p:\widetilde M\to M$ be its universal covering map.
Write $\widetilde T$ for the lifted simplicial complex
and $\widetilde G=\widetilde T^{(1)}$.
Then $\widetilde G$ admits a proper $(d+1)$-coloring
$\widetilde c:V(\widetilde G)\to[d+1]$.
After suitable choices of the initial coloring and the
identification of deck transformations with $\pi_1(M)$,
we have
\[
\widetilde c(\gamma v)
=\mu_G(\gamma)\bigl(\widetilde c(v)\bigr)
\qquad
(\gamma\in\pi_1(M),\ v\in V(\widetilde G)).
\]
\end{proposition}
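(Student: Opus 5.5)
The plan is to first show that $\widetilde G$ is balanced, and then compare color transport on $\widetilde T$ with color transport on $T$.

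\emph{Step 1: $\widetilde T$ is even and simply connected.}
Evenness lifts because $p$ is a simplicial covering. Hence each $(d-2)$-face of $\widetilde T$ has the same degree as its image.

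\emph{Step 2: $\widetilde G$ is balanced.}
The coloring monodromy of $\widetilde G$ is trivial, since $\pi_1(\widetilde M)=1$. Proposition~\ref{prop:balanced} and Corollary~\ref{cor:simplyconn} do not apply verbatim, because $\widetilde T$ may be infinite. However, the argument of Proposition~\ref{prop:balanced} works without change. Fix a lift $\widetilde\sigma_*$ of $\sigma_*$, and color it with $c_*\circ p$. Transport this coloring along facet walks in $\widetilde T$. Triviality of the monodromy makes the result independent of the walk, using only local finiteness and the dual $2$-cell argument. This gives a proper coloring $\widetilde c:V(\widetilde G)\to[d+1]$.

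\emph{Step 3: equivariance.}
Identify deck transformations with $\pi_1(M)$ in the standard way. A closed facet walk $W$ in $T$ based at $\sigma_*$ lifts to a walk $\widetilde W$ from $\widetilde\sigma_*$ to $\gamma\widetilde\sigma_*$, where $\gamma=[W]$; the convention on the order of composition will be fixed to match. The coloring $\widetilde c$ restricted to a facet is the transport along $\widetilde W$. Since $p$ commutes with transport, $\varphi_W=p\circ\varphi_{\widetilde W}\circ(p|_{\widetilde\sigma_*})^{-1}$.

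Now take a vertex $v\in\widetilde\sigma_*$. On one hand, $\widetilde c(\gamma v)=c_*(p v)$ computed at $\gamma\widetilde\sigma_*$ after transport along $\widetilde W$. On the other hand, the coloring transported back to $\sigma_*$ is $c_*\circ\varphi_W^{-1}$. Comparing these two descriptions should give
\[
\widetilde c|_{\gamma\widetilde\sigma_*}\circ\gamma=\mu_G(\gamma)^{\pm1}\circ\widetilde c|_{\widetilde\sigma_*}.
\]
The sign ambiguity is where I expect the main difficulty. It will be absorbed by the ``suitable choices'' in the statement: either replace $\gamma$ by $\gamma^{-1}$ in the identification of deck transformations, or invert via the anti-homomorphism convention.

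\emph{Step 4: from one facet to all vertices.}
To pass from $\widetilde\sigma_*$ to every vertex, I use a uniqueness argument. Both $v\mapsto\widetilde c(\gamma v)$ and $v\mapsto\mu_G(\gamma)(\widetilde c(v))$ are proper $(d+1)$-colorings of $\widetilde G$. Indeed, $\gamma$ is a simplicial automorphism, and postcomposing with a permutation preserves properness. Step 3 shows these two colorings agree on $\widetilde\sigma_*$. A proper $(d+1)$-coloring of a connected pseudomanifold is determined by its values on a single facet, because across a $(d-1)$-face the remaining vertex is forced. Since $D(\widetilde T)$ is connected, the two colorings agree everywhere, which proves the identity.

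The main obstacle is the bookkeeping in Step 3. There I must verify that the walk-concatenation convention, the definition of $\mu_G$ via $c_*\circ\varphi_W^{-1}\circ c_*^{-1}$, and the deck-group identification line up to give a homomorphism rather than an anti-homomorphism. I would fix the identification so that the formula holds, checking it directly on a single generator walk.
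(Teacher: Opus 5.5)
\textbf{Overall route.} Your approach is the same as the paper's. You build $\widetilde c$ by transport in the even, simply connected lifted triangulation. You then compare $\widetilde c\circ\gamma$ with $\mu_G(\gamma)\circ\widetilde c$ on one facet via a lifted walk. Finally, you propagate using the fact that a proper $(d+1)$-coloring of a connected triangulation is determined by its values on one facet. The paper only orders the last two steps differently: uniqueness first gives a global $s_\gamma$ with $\widetilde c\circ\gamma=s_\gamma\circ\widetilde c$, and the lifted walk then identifies $s_\gamma$. Your Steps 1, 2 and 4 are fine.

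\textbf{The gap is Step 3.} This is the only non-formal part of the statement, and you leave it at ``should give $\mu_G(\gamma)^{\pm1}$''. There is no sign ambiguity, and your proposed remedy does not work.
\begin{itemize}
\item Replacing $\gamma$ by $\gamma^{-1}$ in the identification turns a group isomorphism into an anti-isomorphism. That is not an admissible identification when $\pi_1(M)$ is nonabelian.
\item In fact $\gamma\mapsto s_\gamma$ is a homomorphism, since $\widetilde c(\gamma\delta v)=s_\gamma s_\delta(\widetilde c(v))$. Under the paper's conventions $\mu_G$ is also a homomorphism. So a relation with $\mu_G(\gamma)^{-1}$ could hold only if $\im\mu_G$ were abelian.
\item Your line ``$\widetilde c(\gamma v)=c_*(pv)$'' is false as written, since it would force trivial monodromy.
\end{itemize}

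\textbf{How to close it.} The missing computation is short.
\begin{itemize}
\item Identify $\gamma=[W]$ with the deck transformation taking $\widetilde\sigma_*$ to the endpoint of the lift $\widetilde W$ of $W$. With the paper's convention that a product traverses $W_1$ first and then $W_2$, this identification is a homomorphism.
\item Because $\widetilde c$ is proper, transport preserves it. Hence $\widetilde c|_{\gamma\widetilde\sigma_*}=\widetilde c|_{\widetilde\sigma_*}\circ\varphi_{\widetilde W}^{-1}$.
\item Combine this with $p\circ\varphi_{\widetilde W}=\varphi_W\circ p$, your $p(\gamma v)=p(v)$, and $\widetilde c|_{\widetilde\sigma_*}=c_*\circ p$ from your Step 2. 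For $v\in V(\widetilde\sigma_*)$ this gives
\[
\widetilde c(\gamma v)=c_*\bigl(\varphi_W^{-1}(p v)\bigr)
=\bigl(c_*\circ\varphi_W^{-1}\circ c_*^{-1}\bigr)\bigl(c_*(p v)\bigr)
=\mu_G(\gamma)\bigl(\widetilde c(v)\bigr).
\]
\end{itemize}
The exponent is $+1$, and this holds for every closed facet walk $W$. So no adjustment of the identification is needed, and your Step 4 then completes the proof exactly as in the paper.
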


\begin{proof}
The lifted triangulation is even, and
$\widetilde M$ is simply connected.
By the same argument as in
Section~\ref{sec:monodromy},
color transport from an initial facet is
independent of the chosen facet walk.
It therefore defines a proper
$(d+1)$-coloring $\widetilde c$
of $\widetilde G$.

For each deck transformation $\gamma$,
the coloring
$v\mapsto\widetilde c(\gamma v)$
is also a proper $(d+1)$-coloring
of $\widetilde G$.
Any two proper $(d+1)$-colorings of a connected
triangulation differ by a permutation of the colors.
Indeed, such a permutation is uniquely determined
on one facet and extends consistently across
adjacent facets.
Thus there is a unique $s_\gamma\in S_{d+1}$
such that
\[
\widetilde c(\gamma v)
=s_\gamma(\widetilde c(v))
\]
for every $v\in V(\widetilde G)$.

It remains to show that $s_\gamma=\mu_G(\gamma)$.
Choose a lift $\widetilde\sigma_*$ of the initial
facet $\sigma_*$ used to define $\mu_G$,
and let $c_*$ be the coloring induced by
$\widetilde c$ on $\sigma_*$.
Let $W$ be a closed facet walk based at
$\sigma_*$ representing $\gamma$.
Under the chosen identification of
$\pi_1(M)$ with the deck transformation group,
the lift of $W$ starting at
$\widetilde\sigma_*$ ends at
$\gamma\widetilde\sigma_*$.

Since $\widetilde c$ is a proper coloring,
color transport along the lifted walk
agrees with $\widetilde c$ on every facet.
Identifying the initial and terminal facets
with $\sigma_*$ via the covering map,
their colorings are $c_*$ and
$s_\gamma\circ c_*$, respectively.
Thus the permutation induced by color transport
along $W$ is precisely $s_\gamma$.
By the definition of coloring monodromy,
we obtain $\mu_G(\gamma)=s_\gamma$.
\end{proof}

Now suppose that $G$ is not balanced.
Let $p:S^d\to\RP^d$ be the antipodal double cover,
let $\widetilde G$ and $\widetilde T$ be the lifted
triangulation graph and simplicial complex,
and let $\alpha$ be the nontrivial deck transformation.
By Proposition~\ref{prop:cover}, choose a proper
$(d+1)$-coloring $\widetilde c$ of $\widetilde G$
such that $\widetilde c(\alpha v)=\tau(\widetilde c(v))$,
where $\tau=\mu_G(\gamma)$ and $\gamma$ is the
nonidentity element of $\pi_1(\RP^d)\cong\ZZ_2$.
By Proposition~\ref{prop:balanced}, $\tau$ is nontrivial.
Since $\tau^2=1$, it exchanges at least one pair of colors.
After relabeling the colors if necessary, we may assume that
$\tau(1)=2$ and $\tau(2)=1$.

We now construct a $2$-coloring
$\varphi:V(\widetilde G)\to\{0,1\}$
such that
\begin{equation}\label{eq:antisym}
\varphi(\alpha v)=1-\varphi(v)
\qquad(v\in V(\widetilde G)).
\end{equation}

First, assign color $0$ to the vertices in $\widetilde c^{-1}(1)$ and color $1$ to those in $\widetilde c^{-1}(2)$.
For each other $2$-element orbit $\{i,j\}$ of $\tau$,
assign value $0$ to all vertices of one color class
and value $1$ to all vertices of the other.

It remains to consider the colors fixed by $\tau$.
Suppose that $\tau(i)=i$.
Then $\alpha$ preserves the color class
$\widetilde c^{-1}(i)$.
Since $\alpha$ has no fixed vertices,
this color class is partitioned into pairs
$\{v,\alpha v\}$.
For each pair, assign value $0$ to one vertex
and value $1$ to the other.
These choices can be made arbitrarily.
By construction, $\varphi$ satisfies
\eqref{eq:antisym}.

We claim that $\varphi$ is a weak $2$-coloring.
Indeed, every facet of $\widetilde T$ contains
exactly one vertex of $\widetilde c^{-1}(1)$
and exactly one vertex of $\widetilde c^{-1}(2)$.
These two vertices receive different values under
$\varphi$, so no facet is monochromatic.

Let $\widetilde Q$ be the spanning subgraph of
$\widetilde G$ consisting of all edges whose
endpoints receive different values under $\varphi$.
By Lemma~\ref{lem:edgeconstruction},
$\widetilde Q$ is a connected bipartite
quadrangulation of $\widetilde G$.
By \eqref{eq:antisym}, an edge of $\widetilde G$
has differently colored endpoints if and only if
its image under $\alpha$ does.
Thus $\alpha$ preserves $\widetilde Q$,
and $\widetilde Q$ induces a spanning
subgraph $Q$ of $G$.
The restriction $p:\widetilde Q\to Q$
is a graph covering.

We show that $Q$ is a quadrangulation of $G$.
Let $\sigma$ be any facet of $T$, and choose
a lift $\widetilde\sigma$ in $\widetilde T$.
Since $\varphi$ is a weak $2$-coloring,
$\widetilde Q[V(\widetilde\sigma)]$
is a nonempty complete bipartite graph
on all vertices of $\widetilde\sigma$.
The covering map sends $\widetilde\sigma$
bijectively onto $\sigma$.
Moreover, an edge of $\sigma$ belongs to $Q$
if and only if its lift in $\widetilde\sigma$
belongs to $\widetilde Q$.
Thus $Q[V(\sigma)]$ is isomorphic to
$\widetilde Q[V(\widetilde\sigma)]$,
and hence is a nonempty complete bipartite graph.
Therefore $Q$ is a spanning quadrangulation of $G$.

Finally, \eqref{eq:antisym} shows that $\alpha$
interchanges the two bipartition classes of
$\widetilde Q$.
Since $\widetilde Q$ is connected,
Lemma~\ref{lem:oddquotient} implies that
$Q$ is not bipartite.
This proves part~(ii) of Theorem~\ref{thm:main}
and completes the proof.\qed

\section{Further consequences and generalizations}\label{sec:general}

We begin with two observations on spanning bipartite
quadrangulations.
The first concerns balanced triangulations of
general closed manifolds, while the second gives
a stronger conclusion for even triangulations
of $\RP^3$.

\begin{proposition}\label{prop:projective-types}
Let $G$ be a triangulation of a connected closed
$d$-manifold $M$, where $d\ge2$, and let $T$
be its underlying simplicial complex.
\begin{enumerate}
\item[(i)]
If $G$ is balanced, then for every integer
$1\le a\le d$, there exists a spanning bipartite
quadrangulation $Q$ of $G$ such that
$Q[V(\sigma)]$ is isomorphic to $ K_{a,d+1-a}$
for every facet $\sigma$ of $T$.

\item[(ii)]
If $M=\RP^3$ and $G$ is even, then there exists
a spanning bipartite quadrangulation $Q$ of $G$
such that
$Q[V(\sigma)]$ is isomorphic to $K_{2,2}$
for every facet $\sigma$ of $T$,
regardless of whether $G$ is balanced.
\end{enumerate}
\end{proposition}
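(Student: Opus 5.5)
For part~(i), fix a proper $(d+1)$-coloring $c:V(G)\to[d+1]$; one exists because $G$ is balanced. Given $1\le a\le d$, define $\varphi(v)=0$ if $c(v)\in\{1,\dots,a\}$ and $\varphi(v)=1$ otherwise. Every facet contains exactly one vertex of each color. Hence it has exactly $a$ vertices of value $0$ and $d+1-a$ vertices of value $1$, and in particular it is not monochromatic. Following the proof of Proposition~\ref{prop:weak-equiv}, let $Q$ consist of the edges of $G$ whose endpoints receive different values. Then $Q$ is a spanning bipartite quadrangulation with $Q[V(\sigma)]\cong K_{a,d+1-a}$ for every facet $\sigma$. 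No further work is needed here.

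For part~(ii), let $d=3$. By the discussion in the proof of Corollary~\ref{cor:mainexist}, $\im\mu_G$ is generated by an involution $\tau\in S_4$. The task is to group the orbits of $\tau$ into two unions of size two each. Lemma~\ref{lem:orbitpartition} then gives a partition $V(G)=V_1\sqcup\cdots\sqcup V_k$ in which each facet meets $V_i$ in $|O_i|$ vertices. Coloring a union of orbits of total size $2$ with $0$ and the rest with $1$ puts exactly two vertices of each color in every tetrahedron. The bichromatic-edge subgraph then induces $K_{2,2}$ on every facet, as in Proposition~\ref{prop:weak-equiv}.

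We split into cases by the cycle type of $\tau$.
\begin{enumerate}
\item[(a)] If $\tau=1$, the orbits are four singletons; take any two of them.
\item[(b)] If $\tau$ is a transposition, say $(12)$, the orbits are $\{1,2\},\{3\},\{4\}$; take $\{1,2\}$ against $\{3,4\}$.
\item[(c)] If $\tau$ is a double transposition, say $(12)(34)$, the orbits are $\{1,2\},\{3,4\}$; take $\{1,2\}$.
\end{enumerate}
Since an involution in $S_4$ has only these cycle types, an orbit union of size exactly $2$ always exists. This is the only step that needs care. It is also where $d=3$ matters: $d+1=4$ is even, and orbit sizes are at most $2$. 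Case (a) is the balanced case, and there part~(i) with $a=2$ also applies directly.

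The main obstacle is only to make sure that Lemma~\ref{lem:orbitpartition} gives exact counts per facet, not merely a weak coloring. It does, since $|V_i\cap V(\sigma)|=|O_i|$. So the proof reduces to the case check above.
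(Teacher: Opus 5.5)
Your proposal is correct and follows the paper's proof essentially verbatim: part~(i) merges the classes of a proper $(d+1)$-coloring into $\{1,\dots,a\}$ versus the rest. In part~(ii), you use that $\tau$ is an involution in $S_4$ (the identity, a transposition, or a double transposition), so some union of $\tau$-orbits has size exactly $2$, and Lemma~\ref{lem:orbitpartition} then puts two vertices of each class in every tetrahedron. The paper compresses your three-case check into the single observation that, after relabeling, $\{1,2\}$ is $\tau$-invariant in each case.
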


\begin{proof}
Suppose first that $G$ is balanced.
Let $c:V(G)\to[d+1]$ be a proper coloring.
For any $1\le a\le d$, partition the color set
into $A=\{1,\ldots,a\}$ and
$B=[d+1]\setminus A$.
Assign color $0$ to the vertices in $c^{-1}(A)$
and color $1$ to those in $c^{-1}(B)$.
Since every facet of $T$ contains exactly one
vertex of each color under $c$, this is a weak
$2$-coloring with exactly $a$ vertices of color $0$
and $d+1-a$ vertices of color $1$ in each facet.
By Proposition~\ref{prop:weak-equiv}, it induces
a spanning bipartite quadrangulation
with the required property.

Now suppose that $M=\RP^3$ and $G$ is even.
Since $\pi_1(\RP^3)\cong\ZZ_2$,
the image of $\mu_G$ is generated by a permutation
$\tau\in S_4$ satisfying $\tau^2=1$.
After relabeling the colors, we may assume that
$\tau$ is one of $1$, $(12)$ and $(12)(34)$.
In each case, $A=\{1,2\}$ is invariant under $\tau$.
As $A$ is a union of monodromy orbits,
Lemma~\ref{lem:orbitpartition} gives a partition
$V(G)=V_0\sqcup V_1$ such that every facet of $T$
contains exactly two vertices from each part.
Coloring $V_0$ and $V_1$ differently gives a weak
$2$-coloring, and Proposition~\ref{prop:weak-equiv}
yields the required quadrangulation.
\end{proof}

\begin{remark}
The quadrangulation in
Proposition~\ref{prop:projective-types}(ii)
also gives rise to a Heegaard splitting of
$\RP^3$.

Let $V_0$ and $V_1$ be the two color classes
of $Q$.
Each tetrahedron of $T$ contains exactly two
vertices from each class, and its four
bichromatic edges form a copy of $K_{2,2}$.

Define a piecewise linear map
$f:\RP^3\to[0,1]$ by assigning $0$ to
the vertices in $V_0$ and $1$ to those in $V_1$,
and extending linearly over each tetrahedron.
Then $\Sigma=f^{-1}(1/2)$ intersects each
tetrahedron in a quadrilateral disk whose
vertices are the midpoints of its four
bichromatic edges.
These disks fit together to form a closed
embedded surface $\Sigma$, which admits
a quadrangulation with one quadrilateral face
for each tetrahedron of $T$.

Moreover, the induced subgraphs
$G[V_0]$ and $G[V_1]$ are both connected.
Indeed, each tetrahedron contains a
monochromatic edge of each color, and
the corresponding edges in adjacent tetrahedra
share a vertex of that color.
Since the dual graph of $T$ is connected,
the assertion follows.

The two submanifolds $f^{-1}([0,1/2])$
and $f^{-1}([1/2,1])$ are regular neighborhoods
of $G[V_0]$ and $G[V_1]$, respectively.
Since both graphs are connected, these
neighborhoods are handlebodies.
Consequently, $\Sigma$ is a Heegaard surface
of $\RP^3$.
This is an instance of the construction
of Heegaard splittings from vertex partitions
described by Rubinstein and Tillmann
\cite{RubinsteinTillmann2018}.
\end{remark}

The preceding observations extend naturally to
even triangulations of arbitrary connected closed
manifolds.
The key point is that a nontrivial partition
into monodromy orbits induces a weak $2$-coloring
of the triangulation itself.
We first prove Theorem~\ref{thm:general}.

\begin{proof}[Proof of Theorem~\ref{thm:general}]
Since $\im\mu_G$ acts intransitively on $[d+1]$,
it has at least two orbits.
By Lemma~\ref{lem:orbitpartition}, these orbits
induce a partition of $V(G)$ into at least two
nonempty sets, each meeting every facet of $G$.
Assigning color $0$ to one of these sets and
color $1$ to all remaining vertices gives a weak
$2$-coloring of $G$.
Proposition~\ref{prop:weak-equiv} then yields
a spanning bipartite quadrangulation.
\end{proof}

For comparison, we give a sufficient condition
for the existence of a non-bipartite quadrangulation.
The idea is to use two sets of colors that may be
interchanged by the monodromy, while the remaining
colors form an invariant set.

\begin{proposition}\label{prop:swappedpartition}
Let $G$ be an even triangulation of a connected
closed $d$-manifold $M$, where $d\ge2$.
Suppose that $[d+1]=A\sqcup B\sqcup C$,
where $|A|=|B|\ge1$ and $C$ may be empty.
Assume that every permutation in $\im\mu_G$
either preserves $A$ and $B$ individually
or interchanges them, and that at least one
permutation interchanges them.
Then $G$ admits a spanning non-bipartite
quadrangulation.
\end{proposition}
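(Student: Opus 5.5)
The idea is to mimic the construction from the proof of Theorem~\ref{thm:main}(ii), but to work on the covering of $M$ determined by the subgroup that preserves $A$ and $B$, rather than on the universal cover. Let $\Gamma=\im\mu_G$ and let $\Gamma_0\le\Gamma$ be the subgroup of permutations that preserve $A$ and $B$ individually. By hypothesis $\Gamma_0$ has index exactly $2$ in $\Gamma$. Hence $K=\mu_G^{-1}(\Gamma_0)$ is an index-$2$ subgroup of $\pi_1(M)$. Let $q:\widehat M\to M$ be the associated connected double cover, with lifted complex $\widehat T$, graph $\widehat G$, and nontrivial deck transformation $\beta$. The elements of $\pi_1(M)\setminus K$ map to $\beta$.

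\textbf{Step 1: a $2$-coloring of $\widehat G$.} I will take the universal cover $\widetilde M$ with the proper coloring $\widetilde c$ from Proposition~\ref{prop:cover}. Define $\widetilde\varphi(v)=0$ if $\widetilde c(v)\in A$, $\widetilde\varphi(v)=1$ if $\widetilde c(v)\in B$, and leave vertices with colors in $C$ undetermined for now. For $\gamma\in K$, the permutation $\mu_G(\gamma)$ preserves $A$ and $B$, so this partial coloring is $K$-invariant and descends to $\widehat M=\widetilde M/K$. Any $\gamma\notin K$ swaps $A$ and $B$, so on $\widehat M$ we get $\widehat\varphi(\beta v)=1-\widehat\varphi(v)$ on the $A\cup B$-vertices.

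Equivalently, and more concretely, Lemma~\ref{lem:orbitpartition}-style transport on $\widehat M$ labels each vertex by ``$A$'', ``$B$'' or ``$C$''. The $A/B$ label is well defined because the monodromy of $\widehat G$ is $\mu_G|_K$, which has image $\Gamma_0$. The set of $C$-labelled vertices is $\beta$-invariant. Since $\beta$ is a fixed-point-free involution on vertices (deck transformation of a nontrivial cover), I will split the $C$-vertices into pairs $\{v,\beta v\}$ and assign $0,1$ within each pair, exactly as in the proof of Theorem~\ref{thm:main}(ii). This gives $\widehat\varphi$ with $\widehat\varphi(\beta v)=1-\widehat\varphi(v)$ for all $v$.

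\textbf{Step 2: weakness and descent.} Every facet of $\widehat T$ contains $|A|\ge1$ vertices labelled $0$ from $A$ and $|B|\ge1$ labelled $1$ from $B$, so $\widehat\varphi$ is a weak $2$-coloring. By Lemma~\ref{lem:edgeconstruction}, $\widehat Q$ (the bichromatic edges) is a connected bipartite quadrangulation of $\widehat G$. Antisymmetry makes $\widehat Q$ $\beta$-invariant, so it descends to a spanning subgraph $Q$ of $G$ with $q:\widehat Q\to Q$ a graph covering. The facet-by-facet argument from the main proof shows that $Q[V(\sigma)]\cong\widehat Q[V(\widehat\sigma)]$ is a nonempty complete bipartite graph. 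Finally, $\beta$ swaps the partite sets of $\widehat Q$, so Lemma~\ref{lem:oddquotient} shows that $Q$ is not bipartite.

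\textbf{Main obstacle.} The delicate point is checking that the $A/B/C$ labelling on $\widehat M$ is well defined and that $\beta$ really swaps the $A$- and $B$-labels. The cleanest route is through the universal cover and Proposition~\ref{prop:cover}: the label of $v$ is determined by which block $\widetilde c(v)$ lies in, and $\widetilde c(\gamma v)=\mu_G(\gamma)(\widetilde c(v))$ gives both invariance under $K$ and the swap under elements outside $K$. One must also note that $q$ restricted to each facet is a bijection, so no facet of $\widehat T$ contains both $v$ and $\beta v$. This is what makes the edge-correspondence step, and hence the descent of the quadrangulation, legitimate.
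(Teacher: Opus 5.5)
Your proof is correct, but it builds the coloring on a different cover than the paper does. Let $\epsilon:\im\mu_G\to\ZZ_2$ record whether a permutation swaps $A$ and $B$. This is a homomorphism with kernel $\Gamma_0$, surjective by hypothesis, which justifies your index-$2$ claim.

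The paper stays on the universal cover $\widetilde M$. It colors $\widetilde c^{-1}(A)$ and $\widetilde c^{-1}(B)$ by $0$ and $1$. For $\widetilde c^{-1}(C)$, it fixes one lift $\widetilde v_0$ in each fiber and sets $\varphi(\gamma\widetilde v_0)=\epsilon(\mu_G(\gamma))$. Thus every deck transformation preserves or swaps the classes of $\varphi$ according to $\epsilon\circ\mu_G$, and Lemma~\ref{lem:oddquotient} is applied to the infinite covering $\widetilde Q\to Q$.

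You instead first pass to $\widetilde M/K$ with $K=\ker(\epsilon\circ\mu_G)$. This puts you exactly in the setting of Theorem~\ref{thm:main}(ii): a finite complex with a free involution $\beta$, where the $C$-vertices can be paired arbitrarily.

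The two constructions are essentially equivalent. The paper's $\varphi$ is $K$-invariant, so it descends to a coloring of $\widehat G$ of your type. Conversely, pulling your $\widehat\varphi$ back to $\widetilde M$ gives a coloring of the paper's form, taking each $\widetilde v_0$ to be a lift with value $0$.

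Your version makes clear that only the index-$2$ subgroup $K$ matters, and it works with finite graphs. The cost is building $\widehat M$ and checking that the block labels descend and that $\beta$ swaps $A$ and $B$. As you note, you check this through Proposition~\ref{prop:cover} on the universal cover anyway, so the paper's route simply skips the intermediate cover.
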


\begin{proof}
Let $\widetilde c:V(\widetilde G)\to[d+1]$
be a proper coloring of the universal covering
triangulation, as in Proposition~\ref{prop:cover}.

We construct a weak $2$-coloring
$\varphi:V(\widetilde G)\to\{0,1\}$.
First, assign value $0$ to the vertices in
$\widetilde c^{-1}(A)$ and value $1$ to those
in $\widetilde c^{-1}(B)$.
It remains to color the vertices in
$\widetilde c^{-1}(C)$.
Since every permutation in $\im\mu_G$
preserves $C$, this set is invariant under
all deck transformations.
For each vertex $v$ of $G$ whose lifts belong
to $\widetilde c^{-1}(C)$, choose one lift
$\widetilde v_0$.
Every lift of $v$ is uniquely of the form
$\gamma\widetilde v_0$ for some
$\gamma\in\pi_1(M)$.
Assign $\varphi(\gamma\widetilde v_0)=0$
if $\mu_G(\gamma)$ preserves $A$ and $B$
individually, and $1$ if it interchanges them.
By construction, every deck transformation
either preserves both color classes of $\varphi$
or interchanges them, according to its
monodromy action on $A$ and $B$.
Moreover, every facet of $\widetilde T$
contains vertices whose colors under
$\widetilde c$ belong to $A$ and $B$,
respectively.
Hence $\varphi$ is a weak $2$-coloring.

Let $\widetilde Q$ be the spanning subgraph
of $\widetilde G$ consisting of all edges
whose endpoints receive different values
under $\varphi$.
By Lemma~\ref{lem:edgeconstruction},
$\widetilde Q$ is a connected bipartite
quadrangulation of $\widetilde G$.
Since every deck transformation preserves
$\widetilde Q$, it descends to a spanning
subgraph $Q$ of $G$.
The restriction of the covering map to
$\widetilde Q$ is a graph covering onto $Q$.

We show that $Q$ is a quadrangulation of $G$.
Let $\sigma$ be any facet of $T$, and choose
a lift $\widetilde\sigma$ in $\widetilde T$.
Since $\varphi$ is a weak $2$-coloring,
$\widetilde Q[V(\widetilde\sigma)]$
is a nonempty complete bipartite graph.
The covering map sends $\widetilde\sigma$
bijectively onto $\sigma$ and preserves
the selected edges.
Hence $Q[V(\sigma)]$ is also a nonempty
complete bipartite graph.
Therefore $Q$ is a spanning quadrangulation
of $G$.

Finally, by assumption, some deck transformation
interchanges the two bipartition classes
of $\widetilde Q$.
Since $\widetilde Q$ is connected,
Lemma~\ref{lem:oddquotient} implies that
$Q$ is non-bipartite.
\end{proof}

We next prove the fundamental-group criterion
stated in Corollary~\ref{cor:index}.

\begin{proof}[Proof of Corollary~\ref{cor:index}]
Let $G$ be an arbitrary even triangulation of $M$.
Suppose that $\im\mu_G$ acts transitively on $[d+1]$.
Fix a color $i\in[d+1]$, and let $H$ be
the subgroup of $\pi_1(M)$ consisting of
elements whose monodromy fixes $i$.

For each $j\in[d+1]$, choose an element
$g_j\in\pi_1(M)$ whose monodromy sends $i$ to $j$.
Then $g_jH$ consists precisely of the elements
whose monodromy sends $i$ to $j$.
Thus these $d+1$ cosets partition $\pi_1(M)$,
so $H$ has index $d+1$.
This contradicts the assumption that
$\pi_1(M)$ has no subgroup of index $d+1$.
Hence $\im\mu_G$ is not transitive,
and Theorem~\ref{thm:general} applies.
\end{proof}

For instance, if $\pi_1(M)$ is finite
and its order is not divisible by $d+1$,
then the hypothesis of
Corollary~\ref{cor:index} holds.
In particular, for a three-dimensional
lens space $L(p,q)$, whose fundamental group
is cyclic of order $p$, the corollary applies
whenever $4$ does not divide $p$.
This includes $\RP^3\cong L(2,1)$.

Our results  illustrate the usefulness of
coloring monodromy in constructing spanning
quadrangulations of even triangulations,
both in real projective spaces and in
more general closed manifolds.

\section*{Acknowledgements}
This work was supported by JSPS KAKENHI Grant Number JP23K13006.

\section*{Data availability}
No datasets were generated or analyzed during this study.

\section*{Declaration of generative AI and AI-assisted technologies in the manuscript preparation process}
The author used ChatGPT to improve the manuscript's language, structure, and readability, and utilized ChatGPT to verify mathematical consistency. The author assume full responsibility for the content, having reviewed and edited all AI-assisted output.

\end{document}